\documentclass[12pt]{amsart}



\usepackage{amssymb}
\usepackage{amsmath}
\usepackage{amsfonts}
\usepackage{bm} 
\usepackage[all]{xy} 
\usepackage{mathrsfs,enumitem}
\usepackage[utf8]{inputenc}
\usepackage[usenames,dvipsnames,svgnames,table]{xcolor}
\usepackage{ulem} 

\newcommand{\disk}{\ensuremath{\mathbb{D}} } 
\newcommand{\riem}{\Sigma}


\newcommand{\Oqc}{\mathcal{O}^{\mathrm{qc}}} 

\newcommand{\sphere}{\overline{\mathbb{C}}}
\newcommand{\Gr}{\operatorname{Gr}}
\newcommand{\hatGr}{\widehat{\Gr}}
\newcommand{\Grfull}{\operatorname{\mathbf{Gr}}}
\newcommand{\Grfullf}{\Grfull(\bm{f})}
\newcommand{\hatGrfull}{\widehat{\Grfull}}
\newcommand{\hatGrfullf}{\hatGrfull(\bm{f})}
\newcommand{\If}{\operatorname{\mathbf{I}}_{\bm{f}}}
\newcommand{\hatIf}{\hat{\operatorname{\mathbf{I}}}_{\bm{f}}}


\theoremstyle{plain}
        \newtheorem{theorem}{Theorem}[section]
        \newtheorem{lemma}[theorem]{Lemma}

\theoremstyle{definition}
        \newtheorem{definition}[theorem]{Definition}

\theoremstyle{remark}
    \newtheorem{remark}[theorem]{Remark}

\numberwithin{equation}{section}
\numberwithin{figure}{section}


\usepackage{fullpage}

\title{A Model of the Teichm\"uller space of genus-zero bordered surfaces by period maps}
\author{David Radnell}
\author{Eric Schippers}
\author{Wolfgang Staubach}

\begin{document}

\begin{abstract}
 We consider Riemann surfaces $\riem$ with $n$ borders homeomorphic to $\mathbb{S}^1$ and no handles.  Using generalized Grunsky operators, we define a period mapping from the infinite-dimensional Teichm\"uller space of surfaces of this type into the unit ball in the linear space of operators on an $n$-fold direct sum of Bergman spaces of the disk.  We show that this period mapping is holomorphic and injective.
\end{abstract}
\thanks{D. Radnell acknowledges the support of the Academy of Finland's project ``Algebraic structures and random geometry of stochastic lattice models". E. Schippers and W. Staubach author are grateful for the financial support from the Wenner-Gren Foundations. E. Schippers is also partially supported by the National Sciences and Engineering Research Council of Canada. }
\maketitle
\begin{section}{Introduction}
\begin{subsection}{Introduction}
 The classical period mapping takes compact Riemann surfaces of genus $g$ into the Siegel upper half-plane, which consists of symmetric $g \times g$ matrices with positive definite imaginary part.  It is a classical fact that this map is holomorphic \cite{Nagbook}.

 S. Nag \cite{NagBulletin} and S. Nag and D. Sullivan \cite{NagSullivan} constructed a period mapping of the universal Teichm\"uller space $T(\disk^+)$, where $\disk^+ = \{z\,:\, |z| <1 \}$. This period map takes the infinite-dimensional Teichm\"uller space into the Siegel disk of bounded operators $T$ on the Dirichlet space of the disk satisfying $\| T \| <1$.  This is an alternate formulation of the Siegel upper half-plane of operators with positive-definite imaginary part.   L. Takhtajan and L.-P. Teo  \cite{TakhtajanTeo_memoirs} later showed, remarkably, that the period mapping is in fact the Grunsky operator of univalent function theory \cite{Duren,Pommerenkebook}, and gave the first complete proof that the period mapping is holomorphic.

 In this paper, we generalize the period mapping to the case of the Teichm\"uller space of genus-zero surfaces with $n$ closed non-overlapping disks removed. The period mapping takes the Teichm\"uller space of this type into the direct product of the Teichm\"uller space of genus-zero surfaces with $n$ punctures and a space of bounded operators on an $n$-fold sum of Bergman spaces of the disk.  The portion mapping into the Teichm\"uller space of punctured surfaces can of course be represented by period matrices using the classical method.

 Our construction uses a generalized Grunsky operator, which was shown by the authors to be bounded by one \cite{RSS_Dirichlet_full}, and thus lies in a kind of Siegel disk.  We show that this mapping is holomorphic.  The separation of the period mapping into a finite-dimensional part, involving compact surfaces with punctures, and an infinite-dimensional part consisting of bounded operators on direct sums of Bergman spaces, relies on a fiber structure of Teichm\"uller space discovered by D. Radnell and E. Schippers \cite{RS_fiber}.  Holomorphicity of this fibration, and a resulting new set of complex coordinates \cite{RS_fiber}, plays a key role in our proof of holomorphicity of the period map.  The demonstration of this was accomplished using
 a variational technique of Radnell \cite{Radnell_thesis} which was obtained by modifying that of F. Gardiner and M. Schiffer \cite{Gardiner,Nagbook}.
 \end{subsection}
\begin{subsection}{Bergman spaces of one-forms}  We establish some notation for Bergman spaces.
 Let $\Omega$ be a domain in $\sphere$.
 Define $A^2(\Omega)_{\text{harm}}$ to be the set of harmonic one-forms $\alpha$ on $\Omega$
 which are $L^2$ in the sense that
 \[   \| \alpha \|^2 = \frac{i}{2} \iint_\Omega \alpha \wedge \overline{\alpha} < \infty.  \]
 We will call this the harmonic Bergman space.  It has a natural inner product given by
 \begin{equation} \label{eq:inner_product}
   (\alpha,\beta) = \frac{i}{2} \iint_\Omega \alpha \wedge \overline{\beta}.
 \end{equation}

 The subset of $A^2(\Omega)_{\text{harm}}$ consisting of holomorphic one-forms, is the Bergman space which is denoted by $A^2(\Omega)$.
 We will represent one-forms in the Bergman space by functions.  That is, if $\alpha$ is a one-form in $A^2(\Omega)$,
 then in $\Omega \backslash \{\infty \}$ it has a unique expression $h(z)\,dz$.  In a neighbourhood of
 $\infty$, using the chart $w \mapsto 1/w$, $\alpha$ has the expression $\alpha = - w^{-2}h(1/w)\, dw$.

 The condition that
 $\alpha = h(z)\, dz\in L^2$ can then be expressed as follows.  For some $r>1$, set $U = \{ z\,:\, |z| <r \}$
 and $V = \{ z\,:\, |z|>1/r \} \cup \{ \infty \}$.  Then $\alpha$ is in $A^2(\Omega)$ if and only if
 \begin{equation} \label{eq:integral_conditions_infinity}
  \iint_{\Omega \cap U} |h(z)|^2  dA_z < \infty \ \ \text{and} \ \
 \iint_{1/(\Omega \cap V)} |w^{-2} h(1/w)|^2 dA_w < \infty.
 \end{equation}
 Here we use $dA_z$ as an abbreviation for $ (d\bar{z} \wedge dz)/2i$, and $1/D$ means
 $\{z \in \sphere \,:\, 1/z \in D \}$.  If both conditions are satisfied then
 \[  \frac{i}{2} \iint_\Omega \alpha \wedge \overline{\alpha} = \iint_{\Omega \backslash \{\infty \}}
       |h(z)|^2 dA_z.  \]
 We will abbreviate the expression for the right hand integral by
 \[   \iint_{\Omega} |h(z)|^2  dA_z, \]
 though the reader should keep in mind the implicit condition on $h$ at $\infty$.

If $\Omega$ is simply connected, then every $\alpha \in A^2(\Omega)_{\text{harm}}$ has a unique decomposition $\alpha = h(z)\, dz + \overline{g(z)} \,d\bar{z}$ for some holomorphic functions $g$ and $h$ in $A^2(\Omega)$.
 That is,
 \[  A^2(\Omega)_{\text{harm}} = A^2(\Omega) \oplus \overline{A^2(\Omega)}.  \]
 It is easily checked that this decomposition is orthogonal with respect to the inner product (\ref{eq:inner_product}).
 Thus we have that
 \[ \| h(z) dz + \overline{g(z)} d\bar{z} \|^2= \iint_\Omega \left( |h(z)|^2 + |g(z)|^2 \right) \,dA_z.   \]

 We will also consider the space of exact one-forms in the harmonic or holomorphic Bergman space,
 which we denote by
 \[  A^2_e(\Omega) = \{ \alpha \in A^2(\Omega) \,:\, \alpha = dH \ \ \text{for some holomorphic} \  H  \}  \]
 and similarly for $A^2_e(\Omega)_{\text{harm}}$.
 In $A^2_e(\Omega)$, if we express $\alpha = h(z)\,dz$, there is some holomorphic function $H$ with domain $\Omega$ such that $H'(z) = h(z)$.  Holomorphicity on $\Omega$ means that $H$ is holomorphic on $\Omega \cap U$
 and $H(1/z)$ is holomorphic on $1/(\Omega \cap V)$.  If $\infty \in \Omega$, this implies in particular that  $H$ has a finite limit as $z \rightarrow \infty$; equivalently, $H$ is continuous in the sphere topology.
 Note that the decomposition of $A^2(\Omega)_{\text{harm}}$ restricts to a decomposition
 $A^2_e(\Omega)_{\text{harm}} = A^2_e(\Omega) \oplus \overline{A^2_e(\Omega)}$, when $\Omega$ is simply connected .

 Up to constants, the exact Bergman spaces are thus each isometric with a Dirichlet space.  The  harmonic Dirichlet space $\mathcal{D}(\Omega)_{\text{harm}}$ is the space of harmonic
 functions $H$ such that
 \begin{equation} \label{eq:Dirichlet_seminorm}
  \frac{i}{2} \iint_{\Omega} dH \wedge \overline{dH} <\infty.
 \end{equation}
 The Dirichlet space of holomorphic functions is denoted $\mathcal{D}(\Omega)$.
 For $p \in \Omega$, $\mathcal{D}_p(\Omega)_{\text{harm}}$ denotes the subset of $\mathcal{D}(\Omega)_{\text{harm}}$ whose elements vanish at $p$, and similarly for $\mathcal{D}_p(\Omega)$.
For simply connected domains $\Omega$, the elements of the harmonic Dirichlet space have a decomposition $H = F + \overline{G}$ where $F$
 and $G$ are holomorphic, so that
 we can write
 \[ \mathcal{D}_p(\Omega)_{\text{harm}} = \mathcal{D}_p(\Omega) \oplus \overline{\mathcal{D}_p(\Omega)}.    \]
Note that in $\mathcal{D}(\Omega)_{\text{harm}}$ the decomposition is not unique because constants are both holomorphic
 and anti-holomorphic.
 We  have the isometry
 \begin{align}  \label{eq:definition_isometry}
  d:\mathcal{D}_p(\Omega)_{\text{harm}} & \longrightarrow A^2_e(\Omega)_{\text{harm}} \nonumber \\
  H & \longmapsto dH.
 \end{align}
 The decompositions of  $\mathcal{D}_p(\Omega)_{\text{harm}}$ and $A^2_e(\Omega)_{\text{harm}}$ commute with this isometry.

 Let $f:D_1 \rightarrow D_2$ be a biholomorphism between two domains $D_1$, $D_2$ in    $\sphere$.  We have a pull-back operator defined by
 \begin{align*}
  \hat{\mathcal{C}}_f: A^2(D_2)_{\text{harm}} & \longrightarrow A^2(D_1)_{\text{harm}} \\
  h(z)\, dz + \overline{g(z)}\,d \bar{z}  & \longmapsto h \circ f(z) \cdot f'(z) \,dz + \overline{g \circ f(z) \cdot f'(z)}    \,  d\bar{z}
 \end{align*}
 This is clearly an isometry.  Furthermore, $\hat{\mathcal{C}}_f$ restricts to an isometry
 from $A^2_e(D_2)_{\text{harm}}$ to $A^2_e(D_1)_{\text{harm}}$.  It also restricts to
 an isometry from  $A^2(D_2)$ to $A^2(D_1)$ and from $A^2_e(D_2)$ to $A^2_e(D_1)$.

 The composition operator
 \begin{align*}
   \mathcal{C}_f:\mathcal{D}_p(D_2)_{\text{harm}} & \longrightarrow \mathcal{D}_{f(p)}(D_1)_{\text{harm}} \\
    H & \longmapsto H \circ f
 \end{align*}
  is also an isometry, and we have that:
  \[  d \circ \mathcal{C}_f = \hat{\mathcal{C}}_f \circ d,  \]
  which incidentally motivates the notation $\hat{\mathcal{C}}_f$.

  \begin{remark}[Notation]
  Throughout the paper, operators without
  hats act on functions and operators with hats act on one-forms. We shall also denote the closure of a set $A$ by $A^{\mathrm{cl}}$, and its interior by $A^{\mathrm{int}}$.
  \end{remark}

 In the remainder of this paper, we will usually identify the elements $\alpha = h(z)\,dz$ of the holomorphic  Bergman space
 with the function $h(z)$, except when emphasizing the fact that the elements are one-forms.  The function is always written as a function of the standard coordinate $z$ in $\mathbb{C} \subset \sphere$ rather than as a function of a coordinate at $\infty$.

 We will not be directly working with Dirichlet spaces in this paper. They will be used only to apply results of
  the authors \cite{RSS_Dirichlet_full} for Dirichlet spaces to Bergman spaces, through the use of the isometry
 (\ref{eq:definition_isometry}).  These results
 involve a ``reflection'' of harmonic Dirichlet functions in quasidisks, obtained by extending to the
 boundary of the quasidisks, and then extending them to the complementary quasidisk. One may summarize the situation as follows: in the present paper, the use of one-forms creates a clearer geometric picture, whereas in the paper \cite{RSS_Dirichlet_full},
 the use of functions created a more clear analytic picture.
\end{subsection}
\end{section}

\begin{section}{Grunsky map for multiply-connected domains} \label{se:reflection}
\begin{subsection}{The generalized Faber and Grunsky operators}
 In this section we define certain generalizations of a Faber operator and the Grunsky operator
 to multiple maps with non-overlapping images.
 First we define the Faber operator and Grunsky operator associated with a single conformal map.
 For the concept of a Faber operator see P. Suetin \cite{Suetin_monograph}; for the Grunsky  operator see for example \cite{BergmanSchiffer,Duren,Pommerenkebook}.

 Let $\Gamma$ be a Jordan curve not containing $\infty$, and let $\Omega^+$ be the bounded component
 of the complement of $\Gamma$ in $\sphere$, and $\Omega^-$ be the other complementary component.
   Let
 \[  \disk^+ = \{z\,:\, |z|<1 \} \quad \text{and} \quad  \disk^- = \{z \,:\, |z| >1 \} \cup \{ \infty \}.  \]
 Let $f:\disk^+ \rightarrow \Omega^+$ be a conformal map.
 Following \cite{RSS_Dirichlet_full, SchippersStaubach_jump}, we define the operators
 \[
 P(\Omega^\pm) : \mathcal{D}_{\text{harm}}(\Omega^+) \longrightarrow \mathcal{D}(\Omega^\pm)
 \]
 by  \[  [P(\Omega^\pm) h](z) = \pm \lim_{r \nearrow 1} \frac{1}{2 \pi i} \int_{f(C_r)}
    \frac{h(\zeta)}{\zeta -z } \,d\zeta, \quad z \in \Omega^\pm,  \]
 where $C_r$ is the circle $\{w\,:\, |w|=r \}$ traced counter-clockwise. Furthermore, define the map (the Faber operator)
 \begin{align*}
  \text{I}_f: \mathcal{D}_\infty(\disk^-) & \longrightarrow \mathcal{D}_\infty(\Omega^-)  \\
  h & \longmapsto P(\Omega^-) \mathcal{C}_{f^{-1}} \mathbf{R} h
 \end{align*}
 where $\mathbf{R}:\mathcal{D}_\infty(\disk^-) \rightarrow \mathcal{D}_0(\disk^+)$ is
 given by $\mathbf{R} h(z) = h(1/\bar{z})$.

 The limiting integral is necessary since Jordan curves are of course not in general
 rectifiable.
 The operators $P(\Omega^\pm)$ were shown to be well-defined maps which are bounded with respect to the Dirichlet semi-norm (\ref{eq:Dirichlet_seminorm}).
 It was also shown in \cite{SchippersStaubach_jump} that the Faber operator is an isomorphism precisely for quasicircles.
 This remarkable result is originally due to Y. Shen \cite{Shen_Faber}, with a somewhat different formulation
 of the operator; closely related results for convergence of Faber series on quasidisks were obtained
 by A. \c{C}avu\c{s} \cite{Cavus}.

 We now consider the multiply-connected case.
 The following notation will be in force for the remainder of the paper.  Let $\riem \subset \sphere$
 be a multiply-connected domain, which is bounded by $n$ non-overlapping quasicircles $\Gamma_i$,
 $i=1,\ldots,n$.  We assume that $\infty \in \riem$.  This normalization is a matter of convenience, and will be removed shortly.
 Let $\Omega_i^+$ denote the component of the complement of $\Gamma_i$ in $\sphere$ which does not intersect $\riem$, and let $\Omega_i^-$ denote the other component of the complement of $\Gamma_i$.  For each $i$, $\Omega_i^-$ contains $\riem$; in fact
 \[  \riem = \bigcap_{i=1}^n \Omega_i^-.  \]
  We will also fix points $p_i \in \Omega_i^+$ for $i=1,\ldots,n$.


 For $i=1,\ldots,n$, fix conformal maps $f_i:\disk^+\rightarrow \Omega^+_i$ such that
 $f_i(0)=p_i$.  Let $\bm{f}=(f_1,\ldots,f_n)$.  In \cite{RSS_Dirichlet_full} the following generalized Faber operator was defined:
 \begin{align*}
  \If:\bigoplus^n \mathcal{D}_\infty(\disk^-) & \longrightarrow \mathcal{D}_\infty(\riem) \\
  (h_1,\ldots,h_n) & \longmapsto \sum_{i=1}^n \text{I}_{f_i} h_i.
 \end{align*}
 It was shown in \cite{RSS_Dirichlet_full}
 that this is an isomorphism.  The generalized Grunsky operator was
 also defined:
 \[   \Grfullf = \left( \text{P}_0(\disk^+)\mathcal{C}_{f_1} \If,
 \ldots, \text{P}_0(\disk^+) \mathcal{C}_{f_n} \If  \right): \bigoplus^n \mathcal{D}_\infty(\disk^-)  \longrightarrow \bigoplus^n
  \mathcal{D}_0(\disk^+) \]
 where
 \[  [P_0(\disk^+) H ](z)= [P(\disk^+) H](z) - [P(\disk^+) H](0).  \]
 The blocks of this matrix (taking the $i$th component to the $j$th component
 of the direct sum) are denoted $\Gr_{ji}$. Note that this block depends only on $f_i$ and $f_j$ but for notational convenience we mostly write $\Gr_{ji}$ instead of $\Gr_{ji}(f_i, f_j)$. Further technical work was required to make sense of the composition $\mathcal{C}_{f_j} \If$; this was
 accomplished in \cite{RSS_Dirichlet_full} and publications cited therein.  Essentially, one may think of
 the composition operator as acting on boundary values of harmonic functions.  In
 this paper, we will derive an equivalent integral formula and work directly with that.

 Generalized Grunsky operators for non-overlapping mappings were considered by J. A. Hummel \cite{Hummel}.  They
 are also considered in Takhtajan and Teo \cite{TakhtajanTeo_memoirs}
 in the case of a pair of non-overlapping maps whose images
 fill the sphere minus a quasicircle (that is, for a conformal welding pair).

 We would like to use the equivalent form of the generalized Grunsky operator on exact
 one-forms rather than functions.
 Let
 \[  \oplus^n d = (d,\ldots,d):\bigoplus^n \mathcal{D}_0(\disk^+) \longrightarrow \bigoplus^n A^2(\disk^+)
       \]
 and similarly define
 \[  \oplus^n d^{-1}: \bigoplus^n A^2(\disk^-)  \longrightarrow \bigoplus^n \mathcal{D}_\infty(\disk^-).  \]
 Thus we may define
 \[  \hatIf = d \circ \If \circ \oplus^n d^{-1}:
    \bigoplus^n A^2(\disk^-) \longrightarrow A^2_e(\riem) \]
 and
 \begin{equation}
 \label{eq:hatGr}
 \hatGrfullf  = \oplus^n d \circ \Grfullf \circ \oplus^n d^{-1}
    : \bigoplus^n A^2(\disk^-) \longrightarrow \bigoplus^n A^2(\disk^+)
  \end{equation}
 with the blocks $\hatGr_{ji}(f_j,f_i)$ similarly being defined as the block components of
 $\hatGrfullf$.  We will abbreviate these blocks as $\hatGr_{ji}$.  In the rest of the paper, we will use the Pythagorean norm on the direct sum $\bigoplus^n A^2(\disk^-)$
 \[  \| (h_1,\ldots,h_n) \|_{\bigoplus^n A^2(\disk^-)}^2 = \sum_{k=1}^n \| h_k\|_{A^2(\disk^-)}^2   \]
 and similarly for $\bigoplus A^2(\disk^+)$.

 \begin{remark}  \label{re:Grunsky_interpretation}
  It can be shown that the graph of the Grunsky operator in $\mathcal{D}_\infty(\disk^+) \oplus  \mathcal{D}_0(\disk^-)$
  is the pull-back of the Dirichlet space $\mathcal{D}_\infty(\riem)$ under
  $(\mathcal{C}_{f_1},\ldots,\mathcal{C}_{f_n})$ \cite{RSS_Dirichlet_full}.
  It follows immediately from the fact that $\If$ is an isomorphism  that $\hatIf$ is also an isomorphism.  Using this fact we can interpret
  the graph of $\hatGrfullf$ as the pull-back of $A^2_e(\riem)$ under
  $\left( \hat{\mathcal{C}}_{f_1},\ldots,\hat{\mathcal{C}}_{f_n} \right)$, so long as we interpret
  $\hat{\mathcal{C}}_{f_i} \hatIf$ as $d\, {\mathcal{C}}_{f_i} \If
  d^{-1}$.  Although we will not make use of this fact in our proofs, it is an important point for interpretation
  of the results of this paper.
 \end{remark}

 \begin{theorem}[\cite{RSS_Dirichlet_full}]  \label{th:Grunsky_bounded}  Let $\riem \subseteq \sphere$ be a domain containing $\infty$, bounded by $n$
  non-intersecting quasicircles $\Gamma_i$, $i=1,\ldots,n$.   Let $\Omega_i^+$ and $\Omega_i^-$ be the bounded and unbounded components
  of the complement of the quasicircle $\Gamma_i$, and let $\bm{f} = (f_1,\ldots,f_n)$
  for conformal maps $f_i: \disk^+ \rightarrow \Omega^+_i$, $i=1,\ldots,n$.  The Grunsky operator $\hatGrfullf$ satisfies $\| \hatGrfullf \|_{\oplus^n A^2(\disk^-) \rightarrow
  \oplus^n A^2(\disk^+)} < 1$.
 \end{theorem}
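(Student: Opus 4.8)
The plan is to deduce the strict bound from the already-established fact \cite{RSS_Dirichlet_full} that the generalized Faber operator $\hatIf$ is an isomorphism, using as a bridge an \emph{energy identity} which plays here the role of the unitarity refinement of the classical Grunsky inequality $\|\mathrm{Gr}\|\le 1$. Concretely, I would prove that for every $\bm h=(h_1,\dots,h_n)\in\bigoplus^n A^2(\disk^-)$,
\begin{equation*}
\|\bm h\|_{\bigoplus^n A^2(\disk^-)}^{2}
 \;=\; \big\|\hatGrfullf\,\bm h\big\|_{\bigoplus^n A^2(\disk^+)}^{2}
 \;+\; \big\|\hatIf\,\bm h\big\|_{A^2_e(\riem)}^{2},
\tag{$\star$}
\end{equation*}
equivalently, that $\bm h\mapsto(\hatIf\,\bm h,\,\hatGrfullf\,\bm h)$ is an isometric embedding of $\bigoplus^n A^2(\disk^-)$ into $A^2_e(\riem)\oplus\bigoplus^n A^2(\disk^+)$. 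Granting $(\star)$, the theorem is immediate: since $\hatIf$ is an isomorphism it is bounded below, say $\|\hatIf\,\bm h\|\ge c\,\|\bm h\|$ with $c>0$, and then $(\star)$ gives $\|\hatGrfullf\,\bm h\|^{2}\le(1-c^{2})\,\|\bm h\|^{2}$, whence $\|\hatGrfullf\|\le\sqrt{1-c^{2}}<1$.

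To establish $(\star)$ I would first unwind the definitions in the boundary-value picture of the composition operators. Given $\bm h$, set $H:=\If(\oplus^n d^{-1}\bm h)\in\mathcal{D}_\infty(\riem)$, so that $\hatIf\,\bm h=dH$. Since each $\Gamma_i$ is a quasicircle, the boundary values of $H$ on $\Gamma_i$ extend to a harmonic Dirichlet function $\widetilde H_i$ on the simply connected domain $\Omega_i^+$; write its orthogonal decomposition as $\widetilde H_i=F_i+\overline{G_i}$ with $F_i,G_i$ holomorphic. Unraveling the Faber operator---the reflection $\mathbf{R}$, the composition with $f_i^{-1}$, and the Plemelj-type identity $P(\Omega_i^+)+P(\Omega_i^-)=\mathrm{id}$ on boundary values from \cite{RSS_Dirichlet_full,SchippersStaubach_jump}---one sees that the antiholomorphic part $\overline{G_i}$ of $\widetilde H_i$ is precisely the reflected and transported copy of $h_i$ built into the Faber operator, so that $\|dG_i\|_{A^2(\Omega_i^+)}=\|h_i\|_{A^2(\disk^-)}$ by the reflection and composition isometries. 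Unraveling the Grunsky operator---the $i$-th component of $\hatGrfullf\,\bm h$ being $d\big(P_0(\disk^+)(\widetilde H_i\circ f_i)\big)$ and $P_0(\disk^+)$ extracting the holomorphic part, normalized to vanish at the origin---one sees that this component equals $\hat{\mathcal{C}}_{f_i}(dF_i)$, so that $\|(\hatGrfullf\,\bm h)_i\|_{A^2(\disk^+)}=\|dF_i\|_{A^2(\Omega_i^+)}$. Thus $(\star)$ reduces to the purely geometric identity $\|dH\|_{A^2(\riem)}^{2}=\sum_{i=1}^{n}\big(\|dG_i\|_{A^2(\Omega_i^+)}^{2}-\|dF_i\|_{A^2(\Omega_i^+)}^{2}\big)$.

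This last identity I would prove by Green's theorem. Since $dH$ is holomorphic, $dH\wedge\overline{dH}=d(H\,\overline{dH})$, so $\|dH\|_{A^2(\riem)}^{2}=\tfrac{i}{2}\iint_\riem dH\wedge\overline{dH}=-\tfrac{i}{2}\sum_i\int_{\Gamma_i}H\,\overline{dH}$, the sign coming from the fact that $\Gamma_i$ carries the orientation opposite to the one it induces as a boundary component of $\riem$. On $\Gamma_i$ one may replace $H$ and $\overline{dH}$ by $\widetilde H_i=F_i+\overline{G_i}$ and $\overline{d\widetilde H_i}$; expanding the product, the two cross terms $\int_{\Gamma_i}F_i\,dG_i$ and $\int_{\Gamma_i}\overline{G_i}\,\overline{dF_i}$ vanish since their integrands are, respectively, holomorphic and conjugate-holomorphic on $\Omega_i^+$, while a second application of Green's theorem turns $\int_{\Gamma_i}F_i\,\overline{dF_i}$ and $\int_{\Gamma_i}\overline{G_i}\,dG_i$ into $-2i\,\|dF_i\|_{A^2(\Omega_i^+)}^{2}$ and $2i\,\|dG_i\|_{A^2(\Omega_i^+)}^{2}$ respectively. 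Summing over $i$ and multiplying by $-i/2$ yields the identity.

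The main obstacle is making these manipulations rigorous near the boundary: the $\Gamma_i$ are quasicircles and hence in general not rectifiable, so every boundary integral above must be read as a limit over the exhausting analytic curves $f_i(C_r)$, $r\nearrow 1$, exactly as in the definition of the operators $P(\Omega_i^\pm)$, and one must justify passing to the limit for Dirichlet-class functions; likewise, the identification of the Faber and Grunsky pieces with the holomorphic/antiholomorphic decomposition of $\widetilde H_i$ rests on the trace and transmission theory for Dirichlet spaces on quasidisks developed in \cite{RSS_Dirichlet_full,SchippersStaubach_jump}. Once $(\star)$ is in hand, the sole remaining ingredient---that $\hatIf$ is an isomorphism---is quoted from \cite{RSS_Dirichlet_full}, and the argument concludes as in the first paragraph.
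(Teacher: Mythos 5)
Your argument is correct in outline but takes a genuinely different route from the paper. The paper's proof of Theorem~\ref{th:Grunsky_bounded} is a two-line quotation: it cites the fact from \cite{RSS_Dirichlet_full} that $\|\Grfull\|<1$ on the Dirichlet-space side and then transfers the bound to $\hatGrfull$ via the isometry $d$. You instead reconstruct a proof of the underlying bound: you propose an isometric ``energy identity'' $\|\bm h\|^2=\|\hatGrfullf\bm h\|^2+\|\hatIf\bm h\|^2$, prove it by a Stokes/Plemelj decomposition of the boundary values on each $\Gamma_i$, and then obtain the strict inequality from the lower bound that the Faber isomorphism provides. This is the multi-curve analogue of Pommerenke's strong form of the Grunsky inequality, and it yields strictly more than the theorem states (the explicit loss term $\|\hatIf\bm h\|^2$). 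Your Stokes computation (the vanishing of the cross terms, the conversion of $\oint F_i\,\overline{dF_i}$ and $\oint\overline{G_i}\,dG_i$ to Bergman norms, and the sign from the orientation of $\partial\riem$) is correct, as is the identification of $\overline{G_i}$ with $(\mathbf R h_i)\circ f_i^{-1}$ given that the contributions $\mathrm I_{f_k}h_k$ with $k\neq i$ are holomorphic across $\Gamma_i$ and therefore feed only into $F_i$. The remark in the paper interpreting the graph of $\hatGrfullf$ as the pull-back of $A^2_e(\riem)$ under $(\hat{\mathcal C}_{f_1},\dots,\hat{\mathcal C}_{f_n})$ is exactly the graph-subspace form of your identity, so the content is acknowledged there even though the paper does not use it in the proof. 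Two cautionary points. First, you correctly flag that the boundary integrals over non-rectifiable quasicircles must be read as limits over the level curves $f_i(C_r)$, and that justifying the limit passage for Dirichlet-class data is the real technical burden; the paper is entitled to skip this because it quotes the result. Second, since both your route and the paper's lean on \cite{RSS_Dirichlet_full}, you should make explicit (as is in fact the case, the Faber isomorphism being proved there before, and independently of, the Grunsky bound, building on Shen's theorem in the single-map case) that invoking the Faber isomorphism is not circular.
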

 \begin{proof}  By a result of \cite{RSS_Dirichlet_full}, the operator norm of
  $\mathbf{Gr}$ is strictly bounded by one.  The claim thus follows from the fact that $d:\mathcal{D}_\infty(\disk^-) \rightarrow A^2(\disk^-)$ and $d:\mathcal{D}_0(\disk^+) \rightarrow A^2(\disk^+)$ are isometries.
 \end{proof}
\end{subsection}

\begin{subsection}{Holomorphicity of $\hatGrfullf$ as a function of $\bm{f}$}
 Here we show that the operator $\hatGrfullf$ is holomorphic as a function of $\bm{f}=(f_1,\ldots,f_n)$.  To do this, certain integral expressions
 for the components of $\hatGrfullf$ are required. First we define the anti-holomorphic reflection
 \begin{align*}
  \hat{\mathbf{R}}:A^2(\disk^-) & \longrightarrow \overline{A^2(\disk^+)} \\
  h(z) \, dz & \longmapsto -  \bar{z}^{-2} h(1/\bar{z})\, d \bar{z}.
 \end{align*}
 This is an anti-isometry by change of variables.
 \begin{theorem} \label{th:integral_representation} Let $\riem$, $\Omega^\pm_i$, $p_i$, and $f_i$ be as above, for $i=1,\ldots,n$.
  We have that for any $i \in \{1,\ldots,n\}$ and $\alpha(z) = h(z)\, dz \in A^2(\disk^-)$
  \[ \hatGr_{ii} h(z) = \frac{1}{\pi} \iint_{\mathbb{D}^+} \left[ \frac{1}{(\zeta - z)^2}
    - \frac{f_i'(\zeta) f_i'(z)}{(f_i(\zeta)-f_i(z))^2} \right] \hat{\mathbf{R}} h(\zeta) \,dA_\zeta.
  \]
Furthermore, for any $i, j \in \{1,\ldots, n\}$ such that $i \neq j$ we have
  \[  \hatGr_{ji} h(z) = \frac{1}{\pi} \iint_{\mathbb{D}^+}  \frac{f_i'(\zeta) f_j'(z)}
    {(f_i(\zeta)-f_j(z))^2}  \, \hat{\mathbf{R}}h(\zeta) dA_\zeta.     \]
 \end{theorem}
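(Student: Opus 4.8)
The plan is to compute the kernel of $\hatGr_{ji}$ directly by tracing through the definitions, reducing everything to the classical single-map Grunsky kernel and the Faber/Cauchy integral formulas. The starting point is the identity $\hatGr_{ji} = d \circ \Gr_{ji} \circ d^{-1}$, where $\Gr_{ji} = P_0(\disk^+)\mathcal{C}_{f_j}\mathrm{I}_{f_i}$ acts on the Dirichlet space. So first I would unwind $\mathrm{I}_{f_i} = P(\Omega_i^-)\mathcal{C}_{f_i^{-1}}\mathbf{R}$ and write out the composite $P_0(\disk^+)\mathcal{C}_{f_j}P(\Omega_i^-)\mathcal{C}_{f_i^{-1}}\mathbf{R}$ as an iterated contour integral. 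The operator $P(\Omega_i^-)$ is a Cauchy-type integral over $\Gamma_i$ (realized as a limit of integrals over $f_i(C_r)$), and $P(\disk^+)$ restricted to $\mathcal{D}_0$ is again a Cauchy integral; composing a Cauchy projection with a composition operator and then another Cauchy projection is exactly the mechanism that produces the Grunsky kernel $\frac{1}{(\zeta-z)^2} - \frac{f'(\zeta)f'(z)}{(f(\zeta)-f(z))^2}$ in the classical one-map theory (see e.g. the treatment in \cite{TakhtajanTeo_memoirs} or \cite{RSS_Dirichlet_full}).

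The translation to one-forms via the isometry $d$ is where the kernel acquires its $dz$-derivative form. Concretely: if the Dirichlet-level operator $\Gr_{ji}$ has a reproducing-type representation $[\Gr_{ji} H](z) = \iint_{\disk^+} K_{ji}(z,\zeta)\, \partial_\zeta\!\left(\text{something}\right) \ldots$, then applying $d$ in $z$ and using that $d^{-1}$ on the $\disk^-$ side is anti-differentiation followed by the reflection pairing, one picks up exactly two derivatives of the kernel, which converts the logarithmic Grunsky-type kernel $-\log\frac{f(\zeta)-f(z)}{\zeta - z}$ (appropriately symmetrized) into $\frac{1}{(\zeta-z)^2} - \frac{f_i'(\zeta)f_i'(z)}{(f_i(\zeta)-f_i(z))^2}$ for the diagonal block and into $\frac{f_i'(\zeta)f_j'(z)}{(f_i(\zeta)-f_j(z))^2}$ for the off-diagonal block. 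The appearance of $\hat{\mathbf{R}}h$ in the integrand is precisely the image of $h\in A^2(\disk^-)$ under $\mathbf{R}$ transported through the isometry $d$: since $d\circ\mathbf{R} = \hat{\mathbf{R}}\circ d$ up to the anti-isometry bookkeeping (which is why $\hat{\mathbf{R}}$ was introduced just before the statement), the reflection operator $\mathbf{R}$ at the Dirichlet level becomes $\hat{\mathbf{R}}$ at the one-form level. I would verify the off-diagonal formula first since it is cleaner — when $i\neq j$ the images $\Omega_i^+$ and $\Omega_j^+$ are disjoint, $f_j(C_r)$ avoids $\Omega_i^+$, and there is no singular diagonal term $\frac{1}{(\zeta-z)^2}$, so the composition of Cauchy kernels is manifestly a convergent double integral with the stated kernel. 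The diagonal case then follows the same computation but requires the usual regularization at $\zeta = z$, which is supplied by the $P_0$ normalization (subtracting the value at $0$) and is exactly the classical Grunsky computation.

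The main obstacle will be justifying the interchange of limits, the passage from the limiting contour integrals $\int_{f_i(C_r)}$ to genuine area integrals over $\disk^+$, and the convergence at the boundary quasicircle, since the curves $\Gamma_i$ are only quasicircles and need not be rectifiable. I would handle this by first doing the computation formally for smooth $\Gamma_i$ (where Stokes/Green's theorem converts the contour integrals over $f_i(C_r)$ into area integrals and everything is classical), obtaining the stated kernel identities in that case, and then using the boundedness of $P(\Omega^\pm)$, $\hatIf$, and all the intervening operators established in \cite{RSS_Dirichlet_full} together with density of nice functions to pass to the limit; the kernels $\frac{f_i'(\zeta)f_j'(z)}{(f_i(\zeta)-f_j(z))^2}$ are uniformly bounded on $\disk^+\times\disk^+$ for $i\neq j$ and locally $L^2$ in the diagonal case, so the resulting integral operators are continuous in the Bergman norm and the identity extends from a dense set to all of $A^2(\disk^-)$. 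A secondary technical point is tracking the antiholomorphic bookkeeping so that $\hat{\mathbf{R}}h$, which lands in $\overline{A^2(\disk^+)}$, is paired correctly against the holomorphic kernel to produce an element of $A^2(\disk^+)$; this is a matter of writing $\hat{\mathbf{R}}h(\zeta) = -\bar\zeta^{-2}\overline{h(1/\bar\zeta)}$ and checking the change of variables, but it must be done carefully to get the constant $1/\pi$ and the signs right.
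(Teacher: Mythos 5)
Your route is genuinely different from the paper's: the paper does not rederive the kernels at all. It quotes the diagonal formula verbatim from \cite{SchippersStaubach_Grunsky_quasicircle} (Theorem 4.13 there), and obtains the off-diagonal formula by differentiating in $z$ a function-level identity from \cite{RSS_Dirichlet_full} (Theorem 4.5 there); the entire analytic content of the paper's proof is the justification of differentiation under the integral sign, done via the estimate $|f_i(\zeta)-f_j(z)|\ge M$, Cauchy--Schwarz, $\|f_i'\|_{A^2(\disk^+)}<\infty$ (finite area of the image), and $\|f_j'\|_{L^\infty(K)}$ on compact $K\subset\disk^+$. What you propose instead is to reprove the cited results from scratch by composing the Cauchy-type projections $P(\Omega_i^-)$, $P_0(\disk^+)$ with the composition operators and transporting through $d$. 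That mechanism is indeed how those references proceed, and your identification $d\circ\mathbf{R}=\hat{\mathbf{R}}\circ d$ is correct; so as a program it is sound, but it is a much longer road than the theorem requires in context.

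Two concrete problems in your sketch. First, the claim that the off-diagonal kernel $f_i'(\zeta)f_j'(z)/(f_i(\zeta)-f_j(z))^2$ is \emph{uniformly bounded} on $\disk^+\times\disk^+$ is false: only the denominator is bounded below; $f_j'$ is in general unbounded near $\partial\disk^+$ (quasicircles need not even be rectifiable). The correct and sufficient statement --- and the one the paper uses both here and again in estimate (3.4) --- is that the kernel lies in $L^2(\disk^+\times\disk^+)$ because each factor $f_i'$, $f_j'$ is in $A^2(\disk^+)$, i.e.\ the operator is Hilbert--Schmidt; locally in $z$ one gets the $L^\infty(K)$ bound. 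Since your density-and-continuity step leans on the boundedness claim, it needs this repair. Second, your two-stage approximation (prove the identity for smooth $\Gamma_i$, then approximate the quasicircle) introduces a new and nontrivial convergence problem for the operators $P(\Omega_i^\pm)$ under boundary perturbation; the definition already carries its own regularization via the limit $r\nearrow 1$ over the analytic curves $f_i(C_r)$, and that is the limit one should exploit. A smaller point: the cancellation of the singularity at $\zeta=z$ in the diagonal block comes from the difference of the two second-order poles in the kernel (their difference extends to $\tfrac16 S(f_i)$ on the diagonal, as the paper uses later), not from the $P_0$ normalization, which only fixes the additive constant.
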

 \begin{proof}
  The first claim is \cite[Theorem 4.13]{SchippersStaubach_Grunsky_quasicircle}, and the
  second follows by differentiating \cite[Theorem 4.5]{RSS_Dirichlet_full}.   Differentiating  under the integral sign is justified by the fact that the integrand is absolutely convergent, locally uniformly in $z$.  To see this, observe that since $|f_i(\zeta)-f_j(z)| \geq M$ for $\zeta, z \in \mathbb{D}^+$, the Cauchy-Schwarz inequality yields that for any compact set $K \subset \disk^+$ and for all $z \in K$ one has
 \begin{equation}
 \begin{split}
 \frac{1}{\pi} \iint_{\mathbb{D}^+} \left| \frac{f_i'(\zeta) f_j'(z)}
    {(f_i(\zeta)-f_j(z))^2}  \, \hat{\mathbf{R}}h(\zeta) \right| dA_\zeta
    \leq \frac{|f_j'(z)|}{M^2} \| f_i' \|_{A^2(\disk^+)} \| \hat{\mathbf{R}}h \|_{\overline{A^2(\disk^{+})}} \\ \leq \frac{\Vert f_j'\Vert_{L^{\infty} (K)}}{M^2} \| f_i' \|_{A^2(\disk^+)} \| h \|_{A^2(\disk^-)},
\end{split}
\end{equation}
   where we have also used the fact that $\hat{\mathbf{R}}$ is an isometry.  Since $f_i(\disk^+)$ has finite area the claim follows.
 \end{proof}
 \begin{remark}  Since $\hat{\mathbf{R}}h \in \overline{A^2(\disk^+)}$, we could
  consider the Grunsky operator as a conjugate complex linear operator on $A^2(\disk^+)$;
  see for example S. Bergman and M. Schiffer
  \cite{BergmanSchiffer}.  Inserting the reflection in the circle is natural in our interpretation of the
  Grunsky operator \cite{SchippersStaubach_Grunsky_quasicircle}, and conveniently
  makes the operator complex linear on $A^2(\disk^-)$.
 \end{remark}

 The integral kernels in Theorem \ref{th:integral_representation} are M\"obius invariant, as we now show.
 For any $i, j \in \{1,\ldots,n\}$ (allowing $i =j$), it is easily seen that
   \[   \frac{(T\circ f_i)'(\zeta) \cdot (T \circ f_j)'(z)}{(T \circ f_i(\zeta)- T \circ f_j(z))^2}  =
   \frac{f_i'(\zeta) f_j'(z)}{(f_i(\zeta) - f_j(z))^2}  \]
   for M\"obius transformations $T$ of the form $T(z) = cz$ and $T(z)=z+b$, $b \in \mathbb{C}$,
   $c \in \mathbb{C} \backslash \{0\}$.  For $T(z)=1/z$, we compute
   \[   \frac{(1/f_i)'(\zeta) \cdot (1/f_j)'(z)}{\left( 1/f_i(\zeta) - 1/f_j(z) \right)^2}
      = \frac{f_i'(\zeta) f_j'(z)}{(f_i(\zeta) - f_j(z))^2}.  \]
   Since the group of M\"obius transformations is generated by these two types of transformations, the claim follows.

 Thus, we can define the operator $\hatGrfullf$ for $\bm{f} = (f_1,\ldots,f_n)$ even when one of the quasidisks $f_i(\disk)$ contains $\infty$ in
 its closure by composing $\bm{f}$ with a M\"obius transformation (equivalently, by using the integral expression as a definition).   Note that this also shows that the integral kernel of any
 block $\hatGr_{ji}$ is non-singular
 on $\disk^+ \times \disk^+$, regardless of whether $\infty$ is in the image of $f_j$
 or $f_i$.  With this extension of the definition to general $\bm{f}$, we have now shown the following:
 \begin{theorem}  \label{th:Grunsky_Mobius_invariant}  Let $\riem$ be an open subset of $\sphere$, bounded by $n$ non-overlapping quasicircles $\Gamma_i$.  Otherwise
 let $\Omega_i^\pm$, $p_i$, and $f_i$ be as in Theorem \ref{th:Grunsky_bounded}
  for $i=1,\ldots,n$.  For any M\"obius transformation $T$, denoting $(T \circ f_1,\ldots,
  T \circ f_n)$ by $T \circ \bm{f}$ , we have
  \[  \hatGrfull(T \circ \bm{f}) = \hatGrfullf. \]
  Furthermore the operator norm of $\hatGrfullf$ is strictly less than one.
 \end{theorem}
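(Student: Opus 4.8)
The plan is to read both assertions off the Möbius invariance of the integral kernels just verified, together with Theorem~\ref{th:Grunsky_bounded} for the special case $\infty \in \riem$. For the identity $\hatGrfull(T \circ \bm{f}) = \hatGrfullf$, I would take as the operative definition of $\hatGrfullf$ (for a general tuple $\bm{f}$) the integral representation of Theorem~\ref{th:integral_representation}: each block $\hatGr_{ji}$ is the integral operator from $A^2(\disk^-)$ to $A^2(\disk^+)$ built from the $\bm{f}$-independent kernel $\tfrac{1}{(\zeta-z)^2}$ (present only when $i=j$) and the kernel $\tfrac{f_i'(\zeta) f_j'(z)}{(f_i(\zeta)-f_j(z))^2}$, composed with the fixed anti-isometry $\hat{\mathbf{R}}$. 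The computation displayed just before the statement shows the second kernel is unchanged when every $f_k$ is replaced by $T \circ f_k$, for all pairs $i,j$ including $i=j$; hence every block is unchanged, and therefore so is the assembled operator, which acts between the \emph{same} fixed spaces $\bigoplus^n A^2(\disk^\pm)$ irrespective of $\bm{f}$. The same observation is exactly what makes the extension of the definition to general $\bm{f}$ (via pre-composition with an auxiliary Möbius transformation when some $\overline{f_i(\disk^+)}$ contains $\infty$) independent of the auxiliary transformation chosen, so there is nothing further to check there.

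For the norm bound I would reduce to the already-proved case $\infty \in \riem$ by a change of coordinate. Since $\riem = \bigcap_i \Omega_i^-$ is a nonempty domain, I pick a point $q \in \riem$ and a Möbius transformation $S$ with $S(q) = \infty$. A Möbius transformation is a homeomorphism of $\sphere$ and is quasiconformal, so it carries quasicircles to quasicircles and preserves non-overlapping; thus $S(\riem)$ is a domain containing $\infty$, bounded by the $n$ non-overlapping quasicircles $S(\Gamma_i)$. Because $q \in \Omega_i^-$ we get $\infty = S(q) \notin \overline{S(\Omega_i^+)}$, so $S(\Omega_i^+)$ is precisely the bounded complementary component of $S(\Gamma_i)$, and $S \circ f_i \colon \disk^+ \to S(\Omega_i^+)$ is the relevant conformal map, with basepoint $S(p_i)$. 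Hence $S \circ \bm{f}$ satisfies every hypothesis of Theorem~\ref{th:Grunsky_bounded}, giving $\| \hatGrfull(S \circ \bm{f}) \| < 1$; combining this with the first part, $\hatGrfullf = \hatGrfull(S \circ \bm{f})$, so $\| \hatGrfullf \| < 1$.

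I do not expect any serious obstacle. The two points that require a moment's care are: in the first part, confirming one is genuinely comparing operators between the \emph{same} domain and codomain, so that equality of integral kernels really does force equality of operators; and in the second part, checking that the transported configuration $S \circ \bm{f}$ meets all of the hypotheses of Theorem~\ref{th:Grunsky_bounded} — in particular that the bounded/unbounded complementary-component bookkeeping comes out correctly, which is exactly what forces the choice $q \in \riem$ (rather than an arbitrary point of $\sphere$) when selecting $S$.
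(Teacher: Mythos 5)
Your proof is correct and follows essentially the same approach as the paper: the paper establishes Möbius invariance of the integral kernels, extends the definition of $\hatGrfullf$ via the integral expression, and then states the theorem as an immediate consequence together with Theorem~\ref{th:Grunsky_bounded}. You simply make explicit the reduction that the paper leaves implicit, namely choosing $q\in\riem$ and a Möbius $S$ with $S(q)=\infty$ and verifying that $S\circ\bm{f}$ satisfies the hypotheses of Theorem~\ref{th:Grunsky_bounded}.
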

 \begin{remark}
  The operators $\hatIf$ and $\hat{\mathcal{C}}_{\bm{f}_i}$ also
  extend to the case that $\infty \notin \riem$, so that the interpretation of
  $\hatGrfullf$ of Remark \ref{re:Grunsky_interpretation} continues to hold.
  Since this not necessary for the proof of the main theorem (and indeed is fairly routine)
  we omit it.
 \end{remark}

 We now require some definitions and results of Radnell and Schippers on non-overlapping
 maps into Riemann surfaces with punctures \cite{RSnonoverlapping}.  Punctured Riemann surfaces
 will be denoted with a superscript $P$.
 Let
 \[  A_1^\infty(\disk^+) = \left\{ \psi:\disk^+ \rightarrow \mathbb{C} \,:\, \psi \text{ holomorphic, }
     \| \psi \|_{A_1^\infty(\disk^+)} = \sup_{z \in \disk^+} (1-|z|^2) |\psi(z)| < \infty \right\}.  \]
  Let $\Oqc$ denote the set of injective conformal maps $g:\disk^+ \rightarrow \mathbb{C}$ such that
 $g(0)=0$ and $g$ is quasiconformally extendible to a map from $\sphere$ to $\sphere$.
 The map
 \begin{align} \label{eq:chi_definition} \begin{split}
  \chi:\Oqc & \longrightarrow \mathbb{C} \oplus A^\infty_1(\disk^+) \\
  g & \longmapsto \left( g'(0), g''/g' \right)
 \end{split} \end{align}
 is a bijection onto an open subset of the Banach space $\mathbb{C} \oplus A^\infty_1(\disk^+)$ with
 respect to the direct sum norm by \cite[Theorem 3.1]{RSnonoverlapping}.  Thus $\Oqc$ inherits
 a complex structure by pull-back.
 We also let
 \[  \Oqc(n) =  \{ \bm{g}=(g_1,\ldots,g_n) \,:\, g_i \in \Oqc \text{ for } i=1,\ldots,n \}  \]
 which also has a complex structure obtained by taking the direct sum of $n$ copies of $\mathbb{C} \oplus A^\infty_1(\disk^+)$, again with the direct sum norm.
 Finally,
 \begin{definition}[\cite{RSnonoverlapping}] Let $\riem^P$ be a compact Riemann surface with punctures $p_1,\ldots,p_n$.  We define
  $\Oqc(\riem^P)$ to be the set of $n$-tuples $\bm{f}=(f_1,\ldots,f_n)$ of injective conformal maps
  $f_i:\disk^+ \rightarrow \riem^P$ such that for $i=1,\ldots,n$ the map $f_i$ has a quasiconformal extension to an open neighbourhood
  of the closure of $\disk^+$, $f_i(0)=p_i$, and $f_i(\disk^+)^{\mathrm{cl}} \cap f_j(\disk^+)^{\mathrm{cl}}$ is empty whenever $i \neq j$. We call $\bm{f} \in \Oqc(\riem^P)$ a \textit{rigging} of $\riem^P$.
 \end{definition}
 In this article we are concerned with the special case that $\riem^P =\sphere
 \backslash \{p_1,\ldots,p_n \}$.
 \begin{remark}
 Holomorphic maps and quasiconformal maps between punctured surfaces have unique holomorphic or quasiconformal
 continuations respectively to the compactifications.  We will not distinguish notationally between these
 maps and their extensions.  A punctured surface can be equivalently represented as a compact
 surface with marked points.
 \end{remark}
 \begin{remark}  \label{re:Oqccoords}  The following fact plays an important role ahead.
 In \cite{RSnonoverlapping} we showed that $\Oqc(\riem^P)$ has a natural complex structure in general.
 The local coordinates simplify in the special case that $\riem^P$ is the sphere with $n$ punctures
 $\sphere \backslash \{ p_1,\ldots,p_n \}$.  By \cite[Corollary 3.5]{RSnonoverlapping}, if we choose compact sets $K_i$ such that $p_i$ is in $K_i^{\mathrm{int}}$ for each $i$, then
 \[  V = \{ (f_1,\ldots,f_n) \in \Oqc(\riem^P) \,: \,  f_i(\disk)^{\mathrm{cl}}\subset K_i^{\mathrm{int}}, i=1,\ldots,n \}  \]
 is open in $\Oqc(\riem^P)$.  Letting
 \[  W = \{ (f_1 - p_1,\ldots,f_n -p_n) \,:\, (f_1,\ldots,f_n) \in V \}  \subseteq {\Oqc}(n) \]
 and applying \cite[Theorem 3.11]{RSnonoverlapping} with
 coordinates $\zeta_i(z)=z - p_i$ the map
 \begin{align} \label{eq:local_coord_G}
  G:W & \longrightarrow V  \nonumber \\
  (g_1,\ldots,g_n) & \longmapsto (g_1 + p_1,\ldots,g_n + p_n)
 \end{align}
 is a biholomorphism.
 \end{remark}

 Let
 \begin{equation} \label{eq:Bn} \mathfrak{B}(n) = \left\{ T: \bigoplus^n A^2(\disk^-) \longrightarrow \bigoplus^n A^2(\disk^+)
    \,:\,  \| T \| <\infty \right\}.
 \end{equation}
 Recall that we are using the Pythagorean norm on $\bigoplus^n A^2(\disk^\pm)$.
 \begin{remark} \label{re:block_holomorphicity}  In addressing holomorphic dependence of the  Grunsky operator on the rigging $\bm{f}$ below, we will need the following elementary observation.  Let
 \[  T: \bigoplus^n A^2(\disk^-) \rightarrow \bigoplus^n A^2(\disk^+), \]
 be a linear operator and let $T_{jk}:A^2(\disk^-) \rightarrow A^2(\disk^+)$ be its blocks.
 Using the inequality $\sup_{1\leq k\leq n}|a_k|\leq\sqrt{ \sum_{k=1}^n |a_k|^2} \leq \sum_{k=1}^n |a_k|$, we obtain for $h = (h_1,\ldots,h_n)$ the inequality
 \[  \| T \| \leq \sup_{h \in \bigoplus^n A^2(\disk^-), \|h \| \leq 1}
 \sum_{k=1}^n \left\| \sum_{j=1}^n T_{jk} h_j \right\|_{A^2(\disk^+)}
   \leq \sup_{\|h_j \| \leq 1, j=1,\ldots,n}   \sum_{k=1}^n  \sum_{j=1}^n  \left\| T_{jk} h_j \right\|_{A^2(\disk^+)}.  \]

   Therefore, to show that $T(t)$ is G\^ateaux holomorphic at $t=0$,
   where $t$ is a complex parameter, it is enough to show that
   \[  \lim_{t \rightarrow 0}   \left\| t^{-1} \left( T_{jk}(t) - T_{jk}(0) - tB_{jk} \right)  \right\| = 0  \]
   for some $B:\bigoplus^n A^2(\disk^-) \rightarrow \bigoplus^n A^2(\disk^+)$ with blocks $B_{jk}$.
 \end{remark}

 \begin{theorem}  \label{th:holomorphicity_Grunsky_comp} Fix distinct points $p_1,\ldots,p_n \in \mathbb{C}$ and
  let $K_i$ be non-intersecting compact sets such that $p_i$ are
  in $K_i^{\mathrm{int}}$ for $i=1,\ldots,n$.
  Let $N = N_1 \times \cdots \times N_n$ where $N_i \subseteq \mathbb{C}$ are open neighbourhoods of
  $0$ such that the sets $K_i + z_i$ are non-intersecting for all $(z_1,\ldots,z_n) \in N$.
  Let $W= \{ (g_1,\ldots,g_n) \in \Oqc(n) \,:\, g_i(\disk)^{\mathrm{cl}} + p_i \subset K_i^{\mathrm{int}} \}$.
  The map
  \begin{align*}
   H : W    \times N & \longrightarrow \mathfrak{B}(n) \\
   (g_1,\ldots,g_n,a_1,\ldots,a_n) & \longmapsto \hatGrfull(g_1+a_1 + p_1,\ldots,g_n+a_n + p_n) \\
  \end{align*}
  is holomorphic.
 \end{theorem}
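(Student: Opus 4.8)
The plan is to prove holomorphicity blockwise using Remark~\ref{re:block_holomorphicity}, reducing the problem to showing that each block $\hatGr_{ji}$, viewed through its integral representation in Theorem~\ref{th:integral_representation}, is G\^ateaux holomorphic in the parameters $(g_1,\ldots,g_n,a_1,\ldots,a_n)$, and then upgrading G\^ateaux holomorphicity to (Fr\'echet) holomorphicity via local boundedness, which is standard for maps between Banach spaces (Hartogs-type theorem for analytic functions on Banach spaces). Since the integral kernels are M\"obius invariant by Theorem~\ref{th:Grunsky_Mobius_invariant}, the shift by $p_i$ in the definition of $H$ is harmless: we have $\hatGr_{ji}(f_i,f_j)$ depending only on $f_i = g_i + a_i + p_i$ and $f_j = g_j + a_j + p_j$, and the translation part of the kernel cancels, so effectively we may work with $g_i + a_i$ (and on the diagonal block the shift genuinely drops out by M\"obius invariance).

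The core computation is the following. Fix a block, say the off-diagonal one (the diagonal case is analogous, with the extra $1/(\zeta-z)^2$ term being independent of the parameters and hence contributing nothing to the derivative). Write the parameters collectively as $t$ ranging over the Banach space $\bigoplus^n(\mathbb{C}\oplus A^\infty_1(\disk^+))\oplus\bigoplus^n\mathbb{C}$, and consider a one-complex-parameter slice $t\mapsto \bm{f}(t)$ through a base point. One differentiates under the integral sign in
\[
\hatGr_{ji}(t)\,h(z) = \frac{1}{\pi}\iint_{\disk^+}\frac{f_i'(\zeta,t)\,f_j'(z,t)}{(f_i(\zeta,t)-f_j(z,t))^2}\,\hat{\mathbf{R}}h(\zeta)\,dA_\zeta,
\]
which is justified exactly as in the proof of Theorem~\ref{th:integral_representation}: on the relevant $W\times N$, the images $f_i(\disk^+)$ stay in the non-intersecting compacta $K_i+$(bounded shift), so $|f_i(\zeta,t)-f_j(z,t)|\geq M>0$ uniformly, and the Cauchy--Schwarz estimate there gives an $A^2(\disk^-)\to A^2(\disk^+)$ operator-norm bound on the difference quotient that is uniform in the parameters on a neighbourhood. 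The key point is that the integrand is, for each fixed $(\zeta,z)$, a holomorphic function of the finitely many "Taylor-type'' parameters of $f_i,f_j$ near the base point — indeed a rational expression in $f_i(\zeta),f_i'(\zeta),f_j(z),f_j'(z)$ with nonvanishing denominator — and the map from the Banach space parameters to the relevant local data ($f_i(\zeta),f_i'(\zeta)$ as functions of $\zeta$ on compacta) is holomorphic because the chart $\chi$ of \eqref{eq:chi_definition} is a biholomorphism onto an open subset of a Banach space and evaluation/integration against fixed kernels is bounded linear. Thus the difference quotient converges in operator norm to a bounded operator $B_{ji}$ whose kernel is the $t$-derivative of the kernel above; this verifies the hypothesis of Remark~\ref{re:block_holomorphicity} for each block, hence $H$ is G\^ateaux holomorphic.

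For the final upgrade: $H$ is locally bounded into $\mathfrak{B}(n)$ — in fact the composite into the closed unit ball is already supplied by Theorem~\ref{th:Grunsky_Mobius_invariant} — and a G\^ateaux-holomorphic, locally bounded map between (subsets of) complex Banach spaces is holomorphic; so $H$ is holomorphic on $W\times N$. The main obstacle I anticipate is not any single estimate but the bookkeeping needed to make the "differentiation under the integral sign'' rigorous \emph{uniformly in the infinite-dimensional parameter}: one must verify that as $t$ varies over a neighbourhood in the Banach space, the quantities $\|f_i'(\cdot,t)\|_{A^2(\disk^+)}$, $\|f_j'(\cdot,t)\|_{L^\infty(K)}$ and the separation constant $M$ stay controlled, which follows from continuity of $\chi^{-1}$ and the hypothesis that the shifted compacta remain disjoint, but deserves to be spelled out carefully since it is exactly the place where the structure of $\Oqc(n)$ and the choice of $W,N$ in the statement is used.
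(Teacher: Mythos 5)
Your overall strategy matches the paper's: reduce to local boundedness plus G\^ateaux holomorphicity, work block by block via Remark \ref{re:block_holomorphicity}, and differentiate the integral kernels of Theorem \ref{th:integral_representation} along complex lines in the parameter space. However, your parenthetical claim that ``the diagonal case is analogous'' is a genuine gap. The entire off-diagonal argument rests on the uniform separation $|f_i(\zeta)-f_j(z)|\geq M>0$ for $\zeta,z\in\disk^+$, which is what makes the kernel square-integrable on $\disk^+\times\disk^+$ and hence controls the operator norm. For $i=j$ the denominator $(f_i(\zeta)-f_i(z))^2$ vanishes on the diagonal $\zeta=z$; the combined kernel $\tfrac{1}{(\zeta-z)^2}-\tfrac{f_i'(\zeta)f_i'(z)}{(f_i(\zeta)-f_i(z))^2}$ is non-singular, but neither term separately is Hilbert--Schmidt and your separation-based estimates do not apply. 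The paper does not treat this case by the same method at all: it invokes Takhtajan and Teo's Theorem B.1 (a nontrivial appendix-length result) for holomorphic dependence of $\hatGr_{ii}$ on $g_i''/g_i'\in A_1^\infty(\disk^+)$, and disposes of the dependence on $g_i'(0)$ and on $a_i$ by M\"obius invariance. You must either cite that result or supply a genuinely different estimate near the diagonal.

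Second, for the off-diagonal blocks you assert that the difference quotient of $\hatGr_{ji}$ converges in operator norm to the operator with the differentiated kernel, ``justified exactly as in the proof of Theorem \ref{th:integral_representation}.'' That proof justifies only absolute convergence of the integral for fixed $z$, locally uniformly in $z$ --- that is, differentiation in the spatial variable; it says nothing about convergence of difference quotients in the parameter $t$ measured in the $A^2(\disk^-)\rightarrow A^2(\disk^+)$ operator norm. Pointwise holomorphicity of the kernel in $t$ together with boundedness of the relevant norms is not by itself enough: one needs a quantitative bound on the second-order Taylor remainder that is uniform over the unit ball of $A^2(\disk^-)$. The paper's device is to write this remainder of the kernel $L_1^t(z,\zeta)$ via Cauchy's integral formula as $\tfrac{t^2}{2\pi i}\oint_{|s|=\delta} L_1^s(z,\zeta)\,s^{-2}(s-t)^{-1}\,ds$, then apply Fubini and Cauchy--Schwarz in the contour variable and bound the $L^2(\disk^+)\rightarrow L^2(\disk^+)$ norm of the operator with kernel $L_1^s$ uniformly in $|s|=\delta$ by its Hilbert--Schmidt norm, which is where the disjointness of the shifted compacta $K_i$ and the uniform control of $\|(f_j^s)'\|_{A^2(\disk^+)}$ enter. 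Some mechanism of this kind is indispensable; your proposal correctly identifies the need for uniform norm control in the infinite-dimensional parameter but omits the step that converts kernel-level holomorphicity into operator-norm holomorphicity.
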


 \begin{proof}

   By \cite[p 198]{Chae} it is enough to show that $H$ is locally bounded and G\^ateaux
   holomorphic.  By Theorem \ref{th:Grunsky_bounded} $\hatGrfullf$ is bounded, so only
   G\^ateaux holomorphicity remains.   By Hartogs' theorem in the Banach space setting \cite{Mujica}
   it is enough to prove holomorphicity on $W$ and $N$ separately. Since $W$ is a subset of $\Oqc(n) = \oplus^n \Oqc$ we further reduce the problem to proving holomorphicity on the individual copies of $\Oqc$. Recall that the complex structure on $\Oqc$ is given by the pull-back of the
   complex structure on $\mathbb{C} \oplus A_1^\infty(\disk^+)$ under
   $g \mapsto (g'(0),g''/g')$ (see equation (\ref{eq:chi_definition})). So holomorphicity on $W$ has been finally reduced to G\^ateaux holomorphicity separately on $\mathbb{C}$ and $A_1^\infty(\disk^+)$.

Note that Remark \ref{re:block_holomorphicity} yields that the holomorphicity of $\hatGr$ follows from the holomorphicity of its blocks $\hatGr_{kl}$, for $k,l = 1,\ldots n$. Recall that the block $\hatGr_{kl}(\bm{f})$ is only a function of $f_k$ and $f_l$.

   We first look at the diagonal components $\hatGr_{ii}$. For fixed $a_1,\ldots,a_n \in N$, holomorphic dependence
   of $\hatGr_{ii}$ on $A_1^\infty(\disk^+)$ is due to Takhtajan and Teo
   \cite[Theorem B.1 p 109]{TakhtajanTeo_memoirs} (note that there they use the integral formula of
   Theorem \ref{th:integral_representation} as the definition of the operator).  Let $f_i = g_i + a_i + p_i$. Since
   $\hatGr_{ii}$ is invariant under $f_i \mapsto c f_i$ for $c \neq 0$ it is independent
   of $g'(0) = f'(0)$, and so
    $\hatGr_{ii}$ is holomorphic on $\mathbb{C}$.  For holomorphicity on $N$, one needs only to observe that $\hatGr_{ii}$ are independent of $(a_1,\ldots,a_n) \in N$.

 Now we prove that the off-diagonal components of $\hatGrfullf$ are holomorphic.
 First we fix $(a_1,\ldots,a_n) \in N$ and prove G\^ateaux holomorphicity on W. Fix $j \in \{1,\ldots,n \}$. We will prove G\^ateaux holomorphicity on the $j$th copy of $\Oqc$. This requires only looking at the blocks $\hatGr_{ji}$ and $\hatGr_{ij}$ for $i \neq j$.

Fix $(g_1^0, \ldots, g_n^0) \in W,$ and consider the complex lines $(q(t), \psi^t)  \in  \mathbb{C} \oplus A^\infty_1(\disk^+)$, where $\psi^t =(g^0_j)''/(g^0_j)'+ t \phi$ for some $\phi \in A^\infty_1(\disk^+)$ and $q(t) = (g_j^0)'(0) + c\,t$ for some $c \in \mathbb{C}$.
 Now define the curve $g^t_j \in \Oqc$ to be the solution of the differential equation $(g^t_j)''/(g^t_j)' = \psi^t$
  with initial conditions $g^t_j(0)=0$ and $(g^t_j)'(0)=q(t)$.   That is, the curve $g^t_j$
  corresponds to the above complex line under
  the map $\chi$ defined in (\ref{eq:chi_definition}).
  Since $\chi(\Oqc) \subseteq
  \mathbb{C} \oplus A^\infty_1(\disk^+)$ is open, there is an $r>0$ such that $g^t_j \in \Oqc$ for all $|t|<r$.

 Let $f_i^0 = g_i^0 + a_i + p_i$ for $i = 1,\ldots n$ and let $f_j^t = g_j^t + a_i + p_i$.
 Using Theorem \ref{th:integral_representation}, we now prove G\^ateaux holomorphicity by proving that for all $i\neq j$, $t \mapsto \hatGr^{t}_{ji}$  and $t \mapsto \hatGr^{t}_{ij}$ are holomorphic in a
neighborhood of $t=0$ in $\mathbb{C}$, where
   \[  \hatGr^t_{ji}h(z) =  \hatGr_{ji}(f_j^t, f_i^0) h(z) =  \frac{1}{\pi} \iint_{\mathbb{D}^+}  \frac{(f^0_i)'(\zeta) (f_j^t)'(z)}
    {(f^0_i(\zeta)-f_j^t(z))^2}  \, \hat{\mathbf{R}}h(\zeta)\, dA_\zeta     \]
  and
    \[  \hatGr^t_{ij} h(z) =  \hatGr_{ij}(f^0_i, f^t_j) h(z) =\frac{1}{\pi} \iint_{\mathbb{D}^+}  \frac{(f_j^t)'(\zeta) (f^0_i)'(z)}
    {(f_j^t(\zeta)-f^0_i(z))^2}  \, \hat{\mathbf{R}}h(\zeta)\,    dA_\zeta.     \]
    Let $L_1^t(z,\zeta):= \frac{(f^0_i)'(\zeta) (f_j^t)'(z)}
    {(f^0_i(\zeta)-f_j^t(z))^2}$ and $L_2^t(z,\zeta):=\frac{(f_j^t)'(\zeta) (f^0_i)'(z)}
    {(f_j^t(\zeta)-f^0_i(z))^2}.$

 To prove the holomorphicity of $\hatGr^{t}_{ji}$ we observe that, for fixed $z,\zeta \in\mathbb{D}^+$,  $L^t_1(z,\zeta)$ is a holomorphic function of $t$ in a  neighborhood of $0$. This follows from the fact that $f^t_j(z)$ is holomorphic
in $t$ for fixed $z$ (by construction; see
 \cite[p 287]{RSnonoverlapping} for an explicit expression).
 Now choose $\delta>0$ so that $\delta <r$.
Then using Cauchy's integral formula we have for all $|t|<\delta$
\begin{gather*}
L^{t}_{1} (z,\zeta)-L^{0}_{1} (z,\zeta) -t\left.
\frac{d}{dt}\right\vert_{t=0}L^{t}_{1} (z,\zeta)\\
=\frac{t^2}{2\pi i}
\oint_{|s|=\delta}\frac{L^{s}_{1} (z,\zeta)}{s^2 (s-t)}\,ds.
\end{gather*}

Setting  $\overline{u(z)}=\hat{\mathbf{R}} h$ and using the equality above together with the fact that $\hat{\mathbf{R}}$ is an isometry, we obtain
\begin{align*}
&\left\Vert \frac{{\hatGr}^{t}_{ji}-\hatGr^{0}_{ji}}{t} -
\left.\frac{d}{dt}\right\vert_{t=0}\hatGr^{t}_{ji}\right\Vert_{A^2(\disk^-) \rightarrow A^2(\disk^+)}\\
& \hspace{1cm} =\sup_{\Vert h \Vert_{{A^2(\disk^-)}}=1}\left\Vert \left(\frac{\hatGr^{t}_{ji}-\hatGr^{0}_{ji}(f)}{t} -
\left.\frac{d}{dt}\right\vert_{t=0}\hat{\Gr}^{t}_{ji,1}(f)\right)h\right\Vert_{A^2(\disk^+)}
\\&\hspace{1cm} = \frac{{|t|}}{2\pi}\sup_{\Vert u \Vert_{{A^2(\disk^+)}}=1}\left( \iint\limits_{\mathbb{D}^+}
\left|\iint\limits_{\mathbb{\disk^+}}
\left(\oint_{|s|=\delta}\frac{L^{s}_{1} (z,\zeta)}{s^2(s-t)}\, ds\right)\,\overline{u(\zeta)}\,
dA_\zeta \right|^2 dA_z\right)^{1/2}.
\end{align*}
By Fubini's theorem and the Cauchy-Schwarz inequality (in the contour integral), we see that
\begin{align*}
&\left\Vert \frac{{\hatGr}^{t}_{ji}-\hatGr^{0}_{ji}}{t} -
\left.\frac{d}{dt}\right\vert_{t=0}\hatGr^{t}_{ji}\right\Vert_{A^2(\disk^-) \rightarrow A^2(\disk^+)}\\
& \ =\frac{{ |t|}}{2\pi}\sup_{\Vert u \Vert_{{A^2(\disk^+)}}=1}\left( \iint\limits_{\mathbb{D}^+}
\left|\oint_{|s|=\delta}\frac{1}{s^2(s-t)}\,\left( \iint\limits_{\mathbb{\disk^+}}
L^{s}_{1} (z,\zeta) \overline{u(\zeta)}\, dA_\zeta \right)\,ds\,
\right|^2 dA_z\right)^{1/2}\\
& \ \leq \frac{{|t|}}{2\pi}\sup_{\Vert u \Vert_{{A^2(\disk^+)}}=1}\Biggl(
\iint\limits_{\mathbb{D}^+}\Biggl(\oint_{|s|=\delta}\frac{|ds|}{|s|^4 |s-t|^2}\Biggr)
\Biggl(\oint_{|s|=\delta}\left|\iint\limits_{\mathbb{D}^+}L^{s}_{1}(z,\zeta)\overline{u(\zeta)}\,
dA_\zeta \right|^2 |ds| \Biggr)dA_z\Biggr)^{1/2}\\
& \ \leq \frac{{ |t|}}{2\pi^2}\Biggl(\oint_{|s|=\delta}\frac{|ds|}{|s|^4|s-t|^2}\Biggr)^{1/2}
\sup_{\Vert u \Vert_{{A^2(\disk^+)}=1}} \left(\oint_{|s|=\delta}\left\Vert \iint\limits_{\mathbb{D}^+} L^{s}_{1}(z,\zeta) \,\overline{u(\zeta)}\,
dA_\zeta \right\Vert^2_{L^2 ( \mathbb{D}^+)}|ds|\right)^{1/2}.
\end{align*}

 Now we claim that for $|s|=\delta$ the operator with kernel  $L^{s}_{1}(z,\zeta)$ is bounded on $L^2 ( \mathbb{D}^+),$ with a norm that depends only on $\delta$.
To see this we observe that $f^0_j(\disk^+)$ and $f^0_i(\disk^+)$ have disjoint closures; furthermore, on any holomorphic curve through $f^0_j$ we can ensure that the closures of the images remain in fixed disjoint sets for sufficiently small $t$ \cite[Corollary 3.5]{RSnonoverlapping}.   As a consequence, for $|s|=\delta$ there is a constant $A_{\delta}>0$ such that $|f^0_{i}(\zeta)-f_{j}^{s}(z)|>A_{\delta}.$   Furthermore the image of $f^0_i$ and $f_j^s$ (for fixed $s$) are both bounded, so, $\| {f^0_i}' \|_{A^2(\disk^+)}$ and $\| {f_j^s}' \|_{A^2(\disk^+)}$ (for fixed $s$) are bounded.
 Again applying \cite[Corollary 3.5]{RSnonoverlapping} the image of $f_j^s$ is contained in a disk in $\mathbb{C}$ of radius independent of $s$ so the bound for $\| {f_j^s}' \|_{A^2(\disk^+)}$ can be chosen uniformly in $s$.
  Therefore, there exist constants $B_{\delta}>0$ and $C_{\delta}>0$ such that
\begin{equation}
\label{L2 bound on the kernel}
\left\{\iint\limits_{\mathbb{D}^+}\iint\limits_{\mathbb{D}^+}\left| L^{s}_{1}(z,\zeta)\right|^2 dA_z \, dA_\zeta \right\}^{1/2}\leq B_{\delta} \Vert (f^0_{i})'\Vert_{A^2 (\mathbb{D}^+)}\Vert (f^{s}_{j})' \Vert_{A^2 (\mathbb{D}^+)}\leq C_{\delta}.
\end{equation}

Now since the operator-norm of the integral operator with kernel $L^{s}_{1}(z,\zeta)$ (as a bounded linear operator from $L^2 (\mathbb{D}^+)$ to itself) is bounded by the left-hand side of \eqref{L2 bound on the kernel}, the claim follows.

Finally for $|t|<\delta$
\begin{gather*}
\left\Vert \frac{\hatGr^{t}_{ji}-\hatGr^{0}_{ji}}{t} -
\left.\frac{d}{dt}\right\vert_{t=0}\hatGr^{t}_{ji}\right\Vert_{A^2(\disk^-) \rightarrow A^2(\disk^+)}\\
\leq
C_{\delta}{|t|}\left(\oint_{|s|=\delta}\frac{|ds|}{|s|^4 |s-t|^2}
\oint_{|s|=\delta} \, \sup_{\Vert u \Vert_{A^2(\disk^+)}=1}\Vert u\Vert^{2}_{A^2 (\mathbb{D}^+)}|ds|\right)^{1/2}\\
\leq C'_{\delta}{|t|}\left(\oint_{|s|=\delta}\frac{2\pi \delta |ds|}{|s|^4 |s-t|^2}
\right)^{1/2}\\
\leq C''_{\delta_{1}}\frac{{ |t|}}{\delta_{1}-|t|},
\end{gather*}
which can be made as small as we like, provided $t$ is chosen small enough. This establishes the G\^ateaux holomorphicity of $\hatGr^{t}_{ji}$ in the first component of $W \times N$.  In the second component, the proof
above can be used in the same way: the integral kernel is holomorphic in $a_j$ under $f^0_j\mapsto
f^0_j +a_j$, so one only need to establish local boundedness.  By the hypotheses on $N$, $|f^0_i(\zeta) - f^0_j(z)|$ is still uniformly bounded and the integral estimate (\ref{L2 bound on the kernel})
continues to hold.  Proceeding as above we obtain holomorphicity in the second component.

The proof of holomorphicity of $\hatGr^{t}_{ij}$ is the same as the above, except that one replaces the $L^2(\mathbb{D}^+)$ boundedness of the integral operator with kernel $L_1^t(z,\zeta)$ with $L^2$ boundedness of the integral operator with kernel $ L_2^t(z,\zeta)$. This ends the proof of the theorem.
 \end{proof}

\end{subsection}
\end{section}
\begin{section}{Period map} \label{se:Teich_embedding}
\begin{subsection}{Fibration of the Teichm\"uller space of bordered surfaces}
In this section we recall some definitions of Teichm\"uller space and rigged Teichm\"uller
space.  We require some results of Radnell and Schippers \cite{RS_fiber} on
a fibration of Teichm\"uller space of surfaces with $n$ borders over the Teichm\"uller space of surfaces with $n$ punctures, which play a central role in the formulation of the period mapping and proof of its holomorphicity.\\

We say that a Riemann surface is a bordered surface of
type $(g,n)$ if it is a Riemann surface of genus $g$ with $n$ boundary
curves homeomorphic to $\mathbb{S}^1$.   More precisely, we
assume that the double of the
Riemann surface $\riem^D$ is of genus $2g+n-1$ and the ideal boundary $\partial \riem$
consists of $n$ closed analytic curves in $\riem^D$ each of which is homeomorphic
to $\mathbb{S}^1$ with respect to the topology inherited from $\riem^D$. We note that such a Riemann surface $\riem$ is a bordered surface in the sense of Ahlfors and Sario \cite{AhlforsSario}.  That is, there is an atlas of charts including boundary charts of the following form.
Any point of the boundary
is contained in a relatively open subset $U$ of the closure $\riem \cup \partial \riem$
such that there is a biholomorphism $\phi:U \rightarrow V$ where $V$ is a relatively open
subset of the upper half plane $\overline{\mathbb{H}} = \{ z \in \mathbb{C} \,:\, \text{Im}(z) \geq 0 \}$
and $\phi(U \cap \partial \riem)$ is an open interval on $\text{Im}(z)=0$.  We assume
that the transition functions $\phi_1 \circ \phi_2^{-1}$ of any pair of
charts are biholomorphic on their domain of definition.  In the case that
both are boundary charts, this means that maps $\phi_1 \circ \phi_2^{-1}$ have
biholomorphic extensions to an open set in $\mathbb{C}$ containing the original domain
of definition of $\phi_1 \circ \phi_2^{-1}$.\\

A quasiconformal map $f:\riem \rightarrow \riem_1$ between bordered Riemann surfaces of type $(g,n)$ must have an extension to the ideal boundary $\partial \riem$.  We will not distinguish this extension notationally from the
map on $\riem$.
We say that quasiconformal maps $f:\riem \rightarrow \riem$ and $g:\riem \rightarrow \riem$
are homotopic rel boundary if they are homotopic via a homotopy $H:[0,1] \times \riem \rightarrow \riem$ such that $H(t,z)=f(z)=g(z)$ for all $z \in \partial \riem$ and $t\in [0,1]$.\\

We now define the Teichm\"uller space of such a Riemann surface.
\begin{definition}
 Let $\riem$ be a Riemann surface whose universal cover is the unit disk.  The Teichm\"uller
 space of $\riem$ is
 \[  T(\riem) = \{ (\riem,f,\riem_1)  \} /\sim  \]
 where $f:\riem \rightarrow \riem_1$ is quasiconformal and
 $(\riem,f_1,\riem_1) \sim (\riem,f_2,\riem_2)$ if and only if there is a
 biholomorphism $\sigma:\riem_1 \rightarrow \riem_2$ such that $f_2^{-1} \circ \sigma \circ f_1$
 is homotopic to the identity rel boundary.  Denote the equivalence class of a
 triple $(\riem,f,\riem_1)$ by $[\riem,f,\riem_1]$.
\end{definition}

 In \cite{Radnell_thesis} Radnell defined a ``rigged Teichm\"uller space'' of a
 punctured surface, which was shown by Radnell and Schippers \cite{RadnellSchippers_monster} to be intermediate between the Teichm\"uller space of a bordered surface and
 that of the compact surface obtained by sewing disks on the boundary.  The rigged Teichm\"uller space and its relation to the usual Teichm\"uller space are instrumental in the proof of the main theorem.
 \begin{definition}
  Let $\riem^P_0$ be a compact surface with punctures $p_1,\ldots,p_n$.  The rigged Teichm\"uller space
  of $\riem^P_0$ is
  \[ \widetilde{T}(\riem^P_0) = \{ (\riem_0^P,F_1,\riem_1^P,\bm{f}) \,:\, F_1:\riem_0^P \rightarrow \riem_1^P \ \text{quasiconformal}, \
     \bm{f} \in \Oqc(\riem_1^P) \} / \sim  \]
  where $\sim$ is an equivalence relation defined by
  $(\riem_0^P,F_1,\riem_1^P,\bm{f}) \sim (\riem_0^P,F_2,\riem_2^P,\bm{g})$ whenever there is a conformal map
  $\sigma:\riem_1^P \rightarrow \riem_2^P$ preserving the punctures and their order such that
  $F_2^{-1} \circ \sigma \circ F_1$ is homotopic to the identity (in such a way that the homotopy is
  constant on the punctures) and $g_i = \sigma \circ f_i$ for all $i=1,\ldots,n$.
 \end{definition}

 There is in general a holomorphic fibration of the Teichm\"uller space of a bordered surface
 over the rigged Teichm\"uller space of a punctured surface.  We need this in the special case
 that $\riem_0$ is $\sphere$ minus disks.
 Fix a collection of disks $D_i = \{ z\,:\, |z-p_i|<r_i \}$, $i=1,\ldots,n$, such that $D_i^{\mathrm{cl}} \cap D_j^{\mathrm{cl}}$
 is empty whenever $i \neq j$.   Set $\Sigma_0 = \sphere \backslash \cup_{i=1}^n D_i^{\mathrm{cl}}$
 and $\Sigma_0^P = \sphere \backslash \{ p_1,\ldots,p_n \}$.
 Finally, fix $\bm{\tau}=(\tau_1,\ldots,\tau_n)$ where for each $i$ the map
 $\tau_i:\disk^+ \rightarrow D_i$ is a conformal bijection such that $\tau_i(0)=p_i$.

  Now let $[\riem_0,F_1,\riem_1] \in T(\riem_0)$.  Let $\mu(F_1)$ be the Beltrami differential
 of $F_1$ on $\riem_0$.  Extend $\mu(F_1)$ to a Beltrami differential on $\riem_0^P$  by setting
 \[  \mu^P(z) =
 \begin{cases}
 \mu(F_1)(z) & \text{for } z \in \riem_0 \\ 0 & \text{for } z \in \sphere \backslash \riem_0 . \\
    \end{cases} \]
 Let $F_1^P: \riem_0^P \rightarrow \sphere$ be a quasiconformal map with
 dilatation $\mu^P$.  Since $F_1^P$ is quasiconformal
 it has a unique continuous (in fact quasiconformal) extension to $\sphere$;
 we will use the same notation for the extension.

 We now define the fibration maps
 \begin{align*}
  \mathcal{P}:T(\riem_0) & \longrightarrow \widetilde{T}(\riem_0^P) \\
  [\riem_0,F_1,\riem_1] & \longmapsto \left[\riem_0^P,F_1^P,\riem_1^P,\left( F^P_1 \circ \tau_1,\ldots,F^P_1 \circ \tau_n \right) \right]
 \end{align*}
 and
 \begin{align*}
   \mathcal{F}:\widetilde{T}(\riem_0^P) & \longrightarrow T(\riem^P) \\
   [\riem_0^P,F_1^P,\riem_1^P,\bm{f}] & \longmapsto [\riem_0^P,F_1^P,\riem_1^P].
 \end{align*}

 We also require some results on the modular group; proofs and details can be found in \cite{RadnellSchippers_monster}.
 The modular group $\text{PModI}(\riem_0)$
 consists of the set of quasiconformal self-maps of $\riem_0$ which are the identity on $\partial \riem_0$,
 modulo homotopy rel boundary.    The ``P'' in ``PMod''
 stands for ``pure'',
 which signifies that the self-maps fix the ordering of the boundary components.
 Given a quasiconformal $\rho:\riem_0 \rightarrow \riem_0$ fixing the boundary, denote
 its equivalence class by $[\rho]$.   The modular group
 $\text{PModI}(\riem_0)$ acts on
 of $T(\riem_0)$ via
 \[  [\rho]^*[\riem_0,F_1,\riem_1] = [\riem_0,F_1 \circ \rho^{-1},\riem_1].  \]

 Let $\text{DB}$ be the subgroup of $\text{PModI}$ generated by Dehn twists around the boundary curves of $\riem_0$.
 It was proven in \cite[Theorem 5.6]{RadnellSchippers_monster} that
 \begin{theorem} \label{th:kernel_of_P}
  $\mathcal{P}(p)=\mathcal{P}(q)$ if and only if there is a $[\rho] \in \operatorname{DB}$ such
  that $[\rho]^* p = q$.
 \end{theorem}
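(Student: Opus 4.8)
The plan is to reduce the statement to a purely topological fact about mapping class groups --- the ``capping'' exact sequence --- together with some bookkeeping relating quasiconformal maps on $\riem_0 = \sphere \setminus \bigcup_i D_i^{\mathrm{cl}}$ to their extensions on $\riem_0^P = \sphere \setminus \{p_1, \dots, p_n\}$. Write $\iota \colon \pmcgi[\riem_0] \to \mathrm{MCG}(\riem_0^P)$ for the homomorphism obtained by filling each boundary curve $\partial D_i$ of $\riem_0$ with the once-punctured disk $D_i \setminus \{p_i\}$: a mapping class of $\riem_0$, represented by a quasiconformal self-map $\eta$ that is the identity on $\partial \riem_0$, is sent to the class of the self-map of $\riem_0^P$ equal to $\eta$ on $\riem_0$ and to the identity on each $D_i \setminus \{p_i\}$, where $\mathrm{MCG}(\riem_0^P)$ denotes mapping classes under homotopies constant on the punctures. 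I would prove (i) $\ker \iota = \operatorname{DB}$, and (ii) $\mathcal{P}(p) = \mathcal{P}(q)$ if and only if $[\eta]^* p = q$ for some $[\eta] \in \ker \iota$. Together these give the theorem.

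The workhorse for (ii) is the elementary fact that the mapping class group of a once-punctured closed disk relative to its boundary circle is trivial (Alexander's trick plus the Birman exact sequence, since the open disk is simply connected); consequently a quasiconformal self-map of $D_i \setminus \{p_i\}$ that is the identity on $\partial D_i$ and fixes $p_i$ is homotopic to the identity through a homotopy that is constant on $\partial D_i$ and fixes $p_i$. For the direction ``$\Leftarrow$'' it suffices to take $\eta = \rho$ a single Dehn twist about $\partial D_i$ and extend it by the identity over $\sphere \setminus \riem_0$ to a homeomorphism $\hat\rho$ of $\sphere$. Since $\hat\rho$ is conformal off $\riem_0$, the map $F_1^P \circ \hat\rho^{-1}$ has the correct Beltrami differential to serve as $(F_1 \circ \rho^{-1})^P$, so $\mathcal{P}([\rho]^* p)$ is represented by $(\riem_0^P, F_1^P \circ \hat\rho^{-1}, \riem_1^P, (F_1^P \circ \hat\rho^{-1}\circ \tau_i)_i)$; and since $\hat\rho^{-1}$ is the identity on every $D_i$, the riggings $F_1^P \circ \hat\rho^{-1}\circ \tau_i = F_1^P \circ \tau_i$ are unchanged. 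Finally $\hat\rho$ is supported in an annular collar of $\partial D_i$ in $\riem_0$, which together with $D_i \setminus \{p_i\}$ forms a once-punctured disk in $\riem_0^P$; in it $\hat\rho$ is a boundary-parallel Dehn twist, hence homotopic to the identity through a homotopy fixing the punctures, by the fact above. Thus $\mathcal{P}([\rho]^*p) = \mathcal{P}(p)$ (with $\sigma = \mathrm{id}$), which also yields $\operatorname{DB} \subseteq \ker \iota$.

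For the direction ``$\Rightarrow$'', write $p = [\riem_0, F_1, \riem_1]$ and $q = [\riem_0, G_1, \riem_2]$. From $\mathcal{P}(p) = \mathcal{P}(q)$ we obtain a conformal bijection $\sigma \colon \riem_1^P \to \riem_2^P$ preserving the punctures and their order, with $(G_1^P)^{-1} \circ \sigma \circ F_1^P$ homotopic to the identity of $\riem_0^P$ through a homotopy constant on the punctures, and with $\sigma \circ F_1^P \circ \tau_i = G_1^P \circ \tau_i$ for all $i$. Because $\riem_1^P$ and $\riem_2^P$ are spheres with $n$ punctures, $\sigma$ extends to a Möbius transformation of $\sphere$; the rigging identity then forces $\sigma(F_1^P(D_i)) = G_1^P(D_i)$ and $\sigma = G_1^P \circ (F_1^P)^{-1}$ on $\partial F_1^P(D_i)$, so $\sigma$ restricts to a biholomorphism of bordered surfaces $\hat\sigma \colon \riem_1 \to \riem_2$ with $\hat\sigma = G_1 \circ F_1^{-1}$ on $\partial \riem_1$. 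Set $H := G_1^{-1} \circ \hat\sigma \circ F_1 \colon \riem_0 \to \riem_0$. Then $H$ is the identity on $\partial \riem_0$, so $[H] \in \pmcgi[\riem_0]$, and $H$ is exactly the restriction to $\riem_0$ of $\Phi := (G_1^P)^{-1} \circ \sigma \circ F_1^P$, which is homotopic to $\mathrm{id}_{\riem_0^P}$ rel punctures. Moreover $\Phi$ preserves each $D_i$, fixes $p_i$, and is the identity on $\partial D_i$; by the once-punctured-disk fact, $\Phi$ is homotopic, rel $\bigcup_i \partial D_i$ and fixing the punctures, to the map $\Phi'$ that equals $H$ on $\riem_0$ and the identity on each $D_i \setminus \{p_i\}$. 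Hence $\Phi' \simeq \mathrm{id}$ rel punctures, which is precisely the statement $\iota([H]) = 1$; so $[H] \in \ker \iota = \operatorname{DB}$. Choosing a boundary-twist word $\rho$ with $[\rho] = [H]$ in $\pmcgi[\riem_0]$, we get $G_1^{-1} \circ \hat\sigma \circ F_1 \circ \rho^{-1} = H \circ \rho^{-1}$, which is the identity on $\partial \riem_0$ and has trivial class, hence is homotopic to the identity rel $\partial \riem_0$; therefore $[\rho]^* p = [\riem_0, F_1 \circ \rho^{-1}, \riem_1] = [\riem_0, G_1, \riem_2] = q$, with $[\rho] \in \operatorname{DB}$ as required.

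The main obstacle is part (i), specifically the inclusion $\ker \iota \subseteq \operatorname{DB}$: a mapping class of $\riem_0$ that dies after capping all boundary circles by once-punctured disks must be a product of boundary Dehn twists. This is the capping exact sequence
\[
 1 \longrightarrow \operatorname{DB} \longrightarrow \pmcgi[\riem_0] \xrightarrow{\ \iota\ } \mathrm{MCG}(\riem_0^P) \longrightarrow 1,
\]
which I would establish by induction on $n$, capping one boundary component at a time and invoking the point-pushing (Birman) exact sequence at each step, as in the modular-group analysis of \cite{RadnellSchippers_monster}. The only other delicate point is the interplay between homotopies of self-maps of the punctured surface $\riem_0^P$ (free at the punctures, with no boundary) and homotopies rel $\partial \riem_0$ of self-maps of the bordered surface $\riem_0$; triviality of the mapping class group of the once-punctured disk rel its boundary is exactly the bridge between the two, and is used in both directions above.
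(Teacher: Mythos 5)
The paper does not actually prove this theorem; the line immediately preceding the statement cites it as Theorem~5.6 of Radnell--Schippers, \emph{Quasisymmetric sewing in rigged Teichm\"uller space} \cite{RadnellSchippers_monster}, so there is no in-paper argument to compare against. Your reconstruction is, as far as I can check, correct, and it is the expected proof: reduce the Teichm\"uller-theoretic statement to the capping exact sequence
\[
 1 \longrightarrow \operatorname{DB} \longrightarrow \pmcgi[\riem_0] \xrightarrow{\ \iota\ } \mathrm{MCG}(\riem_0^P) \longrightarrow 1
\]
obtained by filling each boundary curve with a once-punctured disk, then translate between $\widetilde{T}$-equivalence and the kernel of $\iota$ using the Beltrami-extension description of $\mathcal{P}$. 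The bookkeeping in both directions is handled correctly. In the ``$\Leftarrow$'' step you rightly observe that a boundary Dehn twist, extended by the identity over the caps, has trivial class in $\mathrm{MCG}(\riem_0^P)$ because the twisting curve bounds a once-punctured disk, and that the riggings $F_1^P\circ\tau_i$ are literally unchanged because $\hat\rho^{-1}$ is the identity on each $D_i$; it then suffices to treat generators of $\operatorname{DB}$ since $\mathcal{P}$ is a function. In the ``$\Rightarrow$'' step the crucial normalization is choosing the representative $\riem_1=F_1^P(\riem_0)$, after which the M\"obius $\sigma$ supplied by $\widetilde{T}$-equivalence genuinely restricts to a biholomorphism $\riem_1\to\riem_2$ and $H=G_1^{-1}\circ\hat\sigma\circ F_1$ is a well-defined self-map of $\riem_0$ fixing $\partial\riem_0$ pointwise; the once-punctured-disk lemma (Alexander's trick fixing the puncture) is exactly what lets you rehomotope $\Phi=(G_1^P)^{-1}\circ\sigma\circ F_1^P$ to the canonical extension $\iota(H)$ and conclude $[H]\in\ker\iota$.

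The one point you leave as an assertion rather than a proof is the inclusion $\ker\iota\subseteq\operatorname{DB}$, which is the substance of the capping sequence; your plan to establish it by capping one boundary at a time and applying the Birman point-pushing sequence at each stage is the standard route (cf.\ Farb--Margalit, Prop.~3.19 together with the composite-kernel computation, noting that boundary twists on disjoint curves commute so the iterated kernel is just the group they generate). Since this is a well-known mapping-class-group fact and the cited reference \cite{RadnellSchippers_monster} treats precisely this modular group, relying on it is reasonable. In short: no gap of substance, and the approach matches what the paper delegates to its reference.
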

 Furthermore by \cite[Corollary 6.2, Corollary 5.1]{RadnellSchippers_monster}
 \begin{theorem}  \label{th:FandPholomorphic}
  $\mathcal{F}$ and $\mathcal{P}$ are holomorphic.
 \end{theorem}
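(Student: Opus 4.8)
The statement is quoted from \cite{RadnellSchippers_monster}, so within the present paper it is enough to invoke \cite[Corollary 6.2 and Corollary 5.1]{RadnellSchippers_monster}; for the reader's orientation I indicate the structure of the argument. The plan is to reduce holomorphicity of both $\mathcal{F}$ and $\mathcal{P}$ to two ingredients: the description of the complex structure on the rigged Teichm\"uller space $\widetilde{T}(\riem_0^P)$ as a holomorphic fibre space over $T(\riem_0^P)$ whose fibres are rigging spaces, and the Ahlfors--Bers theorem on holomorphic dependence of the normalized solution of the Beltrami equation on its coefficient.

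For $\mathcal{F}$ one first recalls that $\widetilde{T}(\riem_0^P)$ carries a complex structure in which it is, locally over a coordinate neighbourhood $U \subseteq T(\riem_0^P)$, biholomorphic to the product of $U$ with an open subset of the Banach manifold $\Oqc(n)$ (using the coordinates recorded in Remark \ref{re:Oqccoords} when $\riem_1^P$ is the punctured sphere, and their analogues in general). With respect to this structure $\mathcal{F}$ is precisely the projection onto the first factor, hence holomorphic; this is one of the two cited corollaries.

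For $\mathcal{P}$, holomorphicity is a local property, so it suffices to lift to representatives over a small ball in $T(\riem_0)$. There one represents points by Beltrami differentials $\mu$ on $\riem_0$ with $\|\mu\|_\infty<1$ depending holomorphically on the local coordinate, via a local holomorphic section of the Bers projection from the unit ball of Beltrami differentials onto $T(\riem_0)$. The extension $\mu \mapsto \mu^P$ by zero to $\riem_0^P$ is complex linear into the unit ball of Beltrami differentials on $\riem_0^P$, hence holomorphic; solving the Beltrami equation and normalizing produces $F_1^P$, and by Ahlfors--Bers the assignment $\mu^P \mapsto F_1^P$ is holomorphic. This already gives holomorphicity of the $[\riem_0^P,F_1^P,\riem_1^P]$-component of $\mathcal{P}$. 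For the rigging component one uses that $\mu^P \equiv 0$ on each fixed disk $D_i$ around $p_i$, so $F_1^P|_{D_i}$ is conformal; composing with the fixed conformal maps $\tau_i:\disk^+\to D_i$ gives elements of $\Oqc$, and holomorphic dependence of $F_1^P$ on $\mu^P$ together with Cauchy estimates for the derivatives on compact subsets of $D_i$ yields holomorphic dependence of $(F_1^P\circ\tau_1,\ldots,F_1^P\circ\tau_n)$ in the $\mathbb{C}\oplus A^\infty_1(\disk^+)$-coordinates of the map $\chi$ of \eqref{eq:chi_definition}. Combining the two components gives holomorphicity of $\mathcal{P}$.

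The step I expect to be the main obstacle is the rigging component of $\mathcal{P}$: passing from holomorphic dependence of the quasiconformal map $F_1^P$ (in a sup- or $L^p$-sense on the sphere) to holomorphic dependence of the conformal restriction $F_1^P\circ\tau_i$ in the strong topology of $\Oqc$, equivalently of $(F_1^P\circ\tau_i)''/(F_1^P\circ\tau_i)'$ in $A^\infty_1(\disk^+)$. This is exactly where one exploits that the chosen disks $D_i$ have closures contained in the set where $\mu^P$ vanishes: on a slightly larger disk the solution is still conformal, so interior estimates upgrade weak convergence to convergence of all derivatives locally uniformly. Setting up the complex structure on $\widetilde{T}(\riem_0^P)$ and verifying that $\mathcal{F}$ and $\mathcal{P}$ are compatible with it is the other technically substantial point, but this is carried out in detail in \cite{RadnellSchippers_monster,RSnonoverlapping}.
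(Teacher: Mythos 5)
Your proposal matches the paper exactly: the paper gives no proof of this theorem, simply citing \cite[Corollary 6.2, Corollary 5.1]{RadnellSchippers_monster}, which is precisely the citation you lead with. Your additional sketch of the fibre-space structure and the Ahlfors--Bers argument is a reasonable account of what happens in that reference, but it is supplementary to, not different from, what the paper does.
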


 Finally, we need one further result. Its statement is technical, but it is quite
 powerful for proving holomorphicity in situations which involve conformal welding, either implicitly
 or explicitly.  Here, welding is implicit in the extension of the Beltrami differentials
 by $0$ to the caps.   The general result
 can be found in \cite{RS_fiber}; we specialize
 to the situation that $\riem_0^P$ is a punctured sphere.  Some conditions relating to the
 normalization are added, which do not follow directly from the statement of the theorem in \cite{RS_fiber}.    For this reason we include a brief proof.
 \begin{theorem} \label{th:riggedTeich_coord}
   Fix $n > 3$. Let $\riem_0^P = \sphere \backslash \{p_1,\ldots,p_n \}$ for points $p_1,\ldots,p_n \in \sphere$.
   Let $d$ be the
 dimension of $T(\riem_0^P)$.  Fix any $p = [\riem_0^P,F_*^P,\riem_*^P] \in
 T(\riem_0^P)$ and let $(\riem_0^P,F_*^P,\riem_*^P)$ be the unique representative
 such that $\riem_*^P$ is a sphere with punctures $(q_1,q_2,q_3,\ldots,q_n)$
 where $q_i=p_i$ for $i=1,2,3$.
 Let $\bm{f} = (f_1,\ldots,f_n) \in \Oqc(\riem_*^P)$ be a rigging on $\riem_*^P$ and let $K_i$ be compact, non-overlapping sets on $\riem_*^P$ containing $p_i$
   in their interiors, and let $V$ be as in Remark \ref{re:Oqccoords}.

   There is
 an open set $N \subseteq \mathbb{C}^d$ containing $0$
 and a map $\nu:N \times \sphere \rightarrow \sphere$ such that
   \begin{enumerate}
    \item $\nu_\epsilon$ fixes $p_1$, $p_2$ and $p_3$ (where $\nu_\epsilon(z)=\nu(\epsilon,z)$),
    \item for fixed $\epsilon$, $\nu(\epsilon,z)$ is quasiconformal on $\sphere$
     and holomorphic on $\cup_{i=1}^n K_i$ (that is, one-to-one and meromorphic),
    \item $\nu(\epsilon,z)$ is holomorphic in $\epsilon$ for any fixed $z$, and
    \item denoting $\nu_{\epsilon}(z)=\nu(\epsilon,z)$, the map
     $\epsilon \mapsto [\riem_0,\nu_\epsilon \circ F_*^P,\nu_{\epsilon}(\riem_*^P)]$
     is a local biholomorphic coordinate system on $T(\riem_0^P)$ onto a neighbourhood
     of $p$.
   \end{enumerate}

 Furthermore, for this map $\nu_\epsilon$,
  \begin{enumerate} \setcounter{enumi}{4}
   \item the map
   \begin{align*}
    \Psi: N \times V & \longrightarrow \widetilde{T}(\riem_0^P) \\
    (\epsilon, \bm{f}) & \longmapsto [\riem_0^P,\nu_\epsilon \circ F_*^P, \nu_\epsilon(\riem_*^P),\nu_\epsilon \circ \bm{f}]
   \end{align*}
   is a local biholomorphic coordinate system on $\widetilde{T}(\riem_0^P)$.
  \end{enumerate}
 \end{theorem}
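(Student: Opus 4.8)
The strategy is to read off the map $\nu$ from the general coordinate construction of \cite{RS_fiber}, specialized to $\riem_0^P = \sphere \setminus \{p_1,\ldots,p_n\}$, and then to post-compose with a holomorphically varying M\"obius transformation in order to enforce the additional normalization $\nu_\epsilon(p_i)=p_i$, $i=1,2,3$, which is the only feature of the statement not already contained in \cite{RS_fiber}.

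First I would apply the theorem of \cite{RS_fiber} to the chosen representative $(\riem_0^P,F_*^P,\riem_*^P)$, the rigging $\bm{f}$, and the compact sets $K_i$. This produces an open neighbourhood $N_1 \subseteq \mathbb{C}^d$ of $0$ and a map $\tilde\nu : N_1 \times \sphere \to \sphere$ with $\tilde\nu_0 = \mathrm{id}$, such that each $\tilde\nu_\epsilon$ is quasiconformal on $\sphere$ and one-to-one and meromorphic on $\cup_{i=1}^n K_i$, $\tilde\nu(\epsilon,z)$ is holomorphic in $\epsilon$ for fixed $z$, the map $\epsilon \mapsto [\riem_0^P,\tilde\nu_\epsilon \circ F_*^P,\tilde\nu_\epsilon(\riem_*^P)]$ is a local biholomorphic chart for $T(\riem_0^P)$, and $(\epsilon,\bm{f}) \mapsto [\riem_0^P,\tilde\nu_\epsilon \circ F_*^P,\tilde\nu_\epsilon(\riem_*^P),\tilde\nu_\epsilon \circ \bm{f}]$ is a local biholomorphic chart for $\widetilde{T}(\riem_0^P)$. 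These are exactly the assertions (2)--(5), except for the pinning of $p_1,p_2,p_3$.

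Since $\tilde\nu_0 = \mathrm{id}$ and $\tilde\nu_\epsilon(z)$ depends continuously on $\epsilon$, there is a neighbourhood $N \subseteq N_1$ of $0$ on which $\tilde\nu_\epsilon(p_1),\tilde\nu_\epsilon(p_2),\tilde\nu_\epsilon(p_3)$ are three distinct points of $\sphere$. For $\epsilon \in N$ let $M_\epsilon$ be the unique M\"obius transformation with $M_\epsilon(\tilde\nu_\epsilon(p_i))=p_i$ for $i=1,2,3$. Since the M\"obius transformation carrying one ordered triple of distinct points to another has coefficients that are rational, hence holomorphic, functions of those points, and since $\epsilon \mapsto \tilde\nu_\epsilon(p_i)$ is holomorphic, the assignment $\epsilon \mapsto M_\epsilon \in \mathrm{PSL}_2(\mathbb{C})$ is holomorphic and $M_0 = \mathrm{id}$. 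Set $\nu(\epsilon,z)=\nu_\epsilon(z):=M_\epsilon \circ \tilde\nu_\epsilon(z)$. Then (1) is immediate; (2) holds because $\nu_\epsilon$ is the composition of a quasiconformal map with a M\"obius transformation, and on $\cup_i K_i$ the composition of one-to-one and meromorphic maps; (3) holds because $(\epsilon,z)\mapsto \tilde\nu_\epsilon(z)$ and $(\epsilon,w)\mapsto M_\epsilon(w)$ are holomorphic into $\sphere$ and the composition of holomorphic maps into $\sphere$ is holomorphic. For (4) and (5), note that $M_\epsilon$ restricts to a conformal isomorphism $\tilde\nu_\epsilon(\riem_*^P) \to \nu_\epsilon(\riem_*^P)$ carrying the $i$-th puncture $\tilde\nu_\epsilon(q_i)$ to $\nu_\epsilon(q_i)$, hence preserving the punctures and their order, while $\nu_\epsilon \circ F_*^P = M_\epsilon \circ (\tilde\nu_\epsilon \circ F_*^P)$ and $\nu_\epsilon \circ \bm{f} = M_\epsilon \circ (\tilde\nu_\epsilon \circ \bm{f})$. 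Using $\sigma = M_\epsilon$ as the conformal identification in the equivalence relations defining $T(\riem_0^P)$ and $\widetilde{T}(\riem_0^P)$, the triples (respectively quadruples) built from $\nu_\epsilon$ represent the same classes as those built from $\tilde\nu_\epsilon$; hence the charts attached to $\nu$ agree on $N$ (respectively on $N \times V$) with those attached to $\tilde\nu$, and (4) and (5) follow.

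The step I expect to require the most care is extracting from \cite{RS_fiber} precisely the object described in the second paragraph: a single family $\tilde\nu_\epsilon$ that is simultaneously quasiconformal on $\sphere$, one-to-one and meromorphic on the union of the cap sets $K_i$, holomorphic in $\epsilon$, normalized by $\tilde\nu_0 = \mathrm{id}$, and inducing a chart on both $T(\riem_0^P)$ and $\widetilde{T}(\riem_0^P)$; the M\"obius adjustment itself is elementary by comparison. If the general theorem of \cite{RS_fiber} is phrased with its basepoint normalized differently, one would first conjugate the whole configuration by a fixed M\"obius transformation so that the first three punctures of $\riem_*^P$ lie at $p_1,p_2,p_3$ before invoking it, which is harmless.
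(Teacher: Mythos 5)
Your proposal is correct and follows essentially the same route as the paper: invoke the Gardiner--Schiffer/\cite{RS_fiber} construction to obtain the unnormalized family satisfying (2)--(5), then post-compose with the holomorphically varying M\"obius transformation pinning $p_1,p_2,p_3$, observing that this preserves the equivalence classes in both $T(\riem_0^P)$ and $\widetilde{T}(\riem_0^P)$. The only cosmetic difference is that you make the M\"obius normalization fully explicit while treating property (3) of the unnormalized family as given by \cite{RS_fiber}, whereas the paper states the normalization in one line but derives (3) separately from the holomorphic dependence of the explicit Beltrami coefficient in Gardiner's construction via the Ahlfors--Bers theory.
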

 \begin{proof}
  By a result of F. Gardiner \cite{Gardiner} (see also \cite[Theorem 4.3.2]{Nagbook}) there is a quasiconformal map
  $\nu_\epsilon$ on $\riem_*^P$ which is quasiconformal on $\sphere$, and holomorphic
  on $\cup_{i=1}^n K_i$ such that $\epsilon \rightarrow [\riem_*^P,\nu_\epsilon, \nu_\epsilon(\riem_*^P)]$ form holomorphic coordinates for $T(\riem_*^P)$ in a neighbourhood of $[\riem_*^P,\text{Id},\riem_*^P]$.
  Note that in Gardiner's construction the Beltrami differential of $\nu_\epsilon$ is given
  explicitly and depends holomorphically on $\epsilon$; we will require this fact ahead.
  Since change of base point in Teichm\"uller space is biholomorphic, we obtain that
  $\epsilon \mapsto [\riem_0,\nu_\epsilon \circ F_*^P,\nu_{\epsilon}(\riem_*^P)]$ are coordinates
  on $T(\riem^P)$ for a neighbourhood of $[\riem_0, F_*^P,\riem_*^P]$
  (\cite[Theorem 2.17 and text immediately following]{RS_fiber}).  Thus we have that (2) and (4) hold.
  Claim (5) is stated explicitly and proved in \cite[proof of Theorem 4.1]{RS_fiber}.
  Note that there the map $\Psi$ is labelled $H$.

  Since $\nu_\epsilon(\riem_*^P)$ is quasiconformally equivalent to a punctured sphere,
  by the uniformization theorem it is biholomorphic to the punctured sphere.  Thus we may
  normalize $\nu_\epsilon$ so that $\nu_\epsilon$ is a map of the punctured sphere
  which fixes $q_i=p_i$ for $i=1,2,3$.  Thus we obtain property (1), and the normalization
  obviously does not affect property (2).  Since the normalization preserves the equivalence
  class in both $T(\riem^P)$ and $\tilde{T}(\riem^P)$, the maps in properties (4) and (5)
  are unchanged and thus (4) and (5) continue to hold.

  Finally, recall that the dilatation of $\nu_\epsilon$ depends holomorphically on $\epsilon$;
  property (3) thus is a classical property of solutions to the Beltrami
  equation with holomorphically varying dilatation, \cite[Theorem 1.2.11 p 38]{Nagbook}.
 \end{proof}

\end{subsection}
\begin{subsection}{Representation of Teichm\"uller space by Grunsky matrices}
 We return to the problem of defining the period mapping.
Assume that $n >3$ and recall the definitions of $\hatGrfull$ and $\mathfrak{B}(n)$ from equations (\ref{eq:hatGr}) and (\ref{eq:Bn})  respectively.  We define
  \begin{align} \label{eq:pitilde_definition} \begin{split}
   \tilde{\Pi} : \widetilde{T}(\riem_0^P) & \longrightarrow T(\riem_0^P) \times \mathfrak{B}(n)  \\
   \left( \riem_0^P,F_1^P,\riem_1^P, \bm{f} \right) & \longmapsto
   \left( \left[\riem_0^P,F_1^P,\riem_1^P\right], \hatGrfull(\bm{f}) \right).
  \end{split}\end{align}
  To see that this is well-defined, observe that if
  \[  (\riem_0^P,F_1^P,\riem_1^P,\bm{f}_1) \sim (\riem_0^P,F_2^P,\riem_2^P,\bm{f}_2)  \]
  then there is a M\"obius transformation $\sigma:\sphere \rightarrow \sphere$ taking the
  punctures of $\riem_1^P$ to those of $\riem_2^P$ and such that $\bm{f}_2 = \sigma  \circ \bm{f}_1$.
  Thus $[\riem_0^P,F_1^P,\riem_1^P]=[\riem_0^P,F_2^P,\riem_2^P]$ by the definition of Teichm\"uller
  equivalence, and $\hatGrfull(\bm{f}_1)
  = \hatGrfull(\bm{f}_2)$ by Theorem \ref{th:Grunsky_Mobius_invariant}.  Thus $\tilde{\Pi}$ is well-defined.

  Define also $\Pi= \tilde{\Pi} \circ \mathcal{P}$.  In that case $\Pi$ is given by
 \begin{align}  \label{eq:pi_definition} \begin{split}
  \Pi: T(\riem_0) & \longrightarrow T(\riem_0^P) \times
  \mathfrak{B}(n) \\
  [\riem_0,F_1,\riem_1] &  \longmapsto \left( \left[\riem_0^P,F_1^P,\riem_1^P \right] ,
  \hatGrfull(\bm{f}) \right)
 \end{split} \end{align}
 where
 \[  \bm{f} = \left(  F_1^P \circ \tau_1,\ldots, F_1^P \circ \tau_n \right)  \]
 and $F_1^P$ is determined from $F_1$ via extending the Beltrami differential of $F_1$
 by zero on the caps, as specified in
 the previous section.  Since $\mathcal{P}$ is well-defined \cite{RadnellSchippers_monster}
 and $\tilde{\Pi}$ is well-defined, so is $\Pi$.
 Denote the two components of $\Pi$ by $\Pi_1:T(\riem_0) \rightarrow T(\riem^P_0)$ and
 $\Pi_2:T(\riem_0) \rightarrow \mathfrak{B}(n)$, and similarly for $\tilde{\Pi}$.

 If $n=1$, $n=2$ or $n=3$, the Teichm\"uller space of $\Sigma_0^P$ reduces to a point.
 In those cases, we
 define $\Pi$ and $\tilde{\Pi}$ as maps into $\mathfrak{B}(n)$:
   \begin{align} \label{eq:pitilde_definition_23} \begin{split}
   \tilde{\Pi} : \widetilde{T}(\riem_0^P) & \longrightarrow \mathfrak{B}(n) \\
   \left( \riem_0^P,F_1^P,\riem_1^P, \bm{f} \right) & \longmapsto
     \hatGrfull(\bm{f})
  \end{split}\end{align}
  and
   \begin{align}  \label{eq:pi_definition_23} \begin{split}
  \Pi: T(\riem_0) & \longrightarrow \mathfrak{B}(n)  \\
  [\riem_0,F_1,\riem_1] &  \longmapsto  \hatGrfull(\bm{f}).
 \end{split} \end{align}
 The case that $n=1$ was considered and shown to be holomorphic
 by Takhtajan and Teo \cite{TakhtajanTeo_memoirs}.
 \begin{remark}  \label{re:special_case_equiv_rel}
   In the cases that $n$ is equal to $1$, $2$, or $3$, the equivalence relation
   on $\widetilde{T}(\riem_0^P)$ says that two elements $\left(\riem_0^P,F_1^P,\riem_1^P, \bm{f} \right)$ and $\left(\riem_0^P,F_2^P,\riem_2^P, \bm{g} \right)$ are equivalent
   if and only if there is some conformal map $\sigma:\riem_1^P \rightarrow \riem_2^P$ such that
   $\sigma \circ f_i = g_i$ for $i=1,\ldots,n$.

 \end{remark}
 \begin{remark}  It is clear that $\Pi$ and $\tilde{\Pi}$ depend on $\bm{\tau}$.
 \end{remark}

 In order to prove the main theorem, we require a technical lemma.  Recall that the complex structure on $\Oqc$ is induced by $\mathbb{C} \oplus A_1^\infty(\disk^+)$
\begin{lemma}
\label{le:CompHoloOqc_manyvar}
 Let $E$ be an open subset of $\mathbb{C}$ containing $0$ and $\Delta$ an open
 subset of $\mathbb{C}$. Let $M: \Delta \times E \to \mathbb{C}$ be a map which is holomorphic in both variables and injective in the second variable and let $M_{\epsilon}(z) = M(\epsilon, z)$. Let $\psi \in \Oqc$ satisfy ${\psi(\disk^+)}^{\mathrm{cl}} \subseteq E$.
 Then the map $Q: \Delta \mapsto \Oqc$ defined by $Q(\epsilon) = M_\epsilon \circ \psi$ is holomorphic $($in $\epsilon$$)$.
 \end{lemma}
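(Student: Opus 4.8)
The plan is to unwind the complex structure on $\Oqc$ and reduce to a chain‑rule computation with the chart $\chi$ of \eqref{eq:chi_definition}. Recall that $\Oqc$ carries the Banach‑manifold structure pulled back by the bijection $\chi\colon\Oqc\to\chi(\Oqc)\subseteq\mathbb{C}\oplus A_1^\infty(\disk^+)$, $g\mapsto(g'(0),g''/g')$, onto an open set; hence $Q$ is holomorphic as a map $\Delta\to\Oqc$ if and only if $\chi\circ Q\colon\Delta\to\mathbb{C}\oplus A_1^\infty(\disk^+)$ is holomorphic. Before this one must check $Q$ really takes values in $\Oqc$. This is immediate: since $\overline{\psi(\disk^+)}$ is a compact subset of $E$ and $M_\epsilon:=M(\epsilon,\cdot)$ is holomorphic and injective on $E$, it is conformal on an open neighbourhood of the closure of the quasidisk $\psi(\disk^+)$, and therefore carries $\psi(\disk^+)$ onto a bounded quasidisk; thus $Q(\epsilon)=M_\epsilon\circ\psi$ is a conformal bijection of $\disk^+$ onto a bounded quasidisk and so has a quasiconformal extension to $\sphere$. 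Since $\chi$ depends only on $g'(0)$ and $g''/g'$, both of which are unchanged under $g\mapsto g-g(0)$, we may normalize by $Q(\epsilon)\mapsto Q(\epsilon)-M(\epsilon,0)$ and assume $Q(\epsilon)\in\Oqc$ for every $\epsilon\in\Delta$.

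Next I would compute $\chi\circ Q$. With $\partial_2$ denoting differentiation in the second slot and using $\psi(0)=0$, the chain rule gives $(M_\epsilon\circ\psi)'(0)=\partial_2M(\epsilon,0)\,\psi'(0)$ and
\[
\frac{(M_\epsilon\circ\psi)''}{(M_\epsilon\circ\psi)'}(z)=N(\epsilon,\psi(z))\,\psi'(z)+\frac{\psi''(z)}{\psi'(z)},
\]
where $N(\epsilon,w):=\partial_2^2M(\epsilon,w)/\partial_2M(\epsilon,w)$. Injectivity of $M_\epsilon$ makes $\partial_2M(\epsilon,\cdot)$ zero‑free on $E$, so both $N$ and $\partial_\epsilon N$ are holomorphic on $\Delta\times E$. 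The first component of $\chi\circ Q$ is then the manifestly holomorphic function $\epsilon\mapsto\partial_2M(\epsilon,0)\,\psi'(0)$; and since $\psi''/\psi'$ is a fixed element of $A_1^\infty(\disk^+)$ (because $\psi\in\Oqc$), it remains only to show that $R\colon\Delta\to A_1^\infty(\disk^+)$, $R(\epsilon):=N(\epsilon,\psi(\cdot))\,\psi'(\cdot)$, is holomorphic. Here I would use the Koebe bound $\|\psi'\|_*:=\sup_{z\in\disk^+}(1-|z|^2)|\psi'(z)|<\infty$, which holds because $\psi$ is univalent with bounded image — this is the one place the hypothesis ${\psi(\disk^+)}^{\mathrm{cl}}\subseteq E$ (forcing $\overline{\psi(\disk^+)}$ to be compact in $E$) is used. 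Then $\|R(\epsilon)\|_{A_1^\infty(\disk^+)}\le\|\psi'\|_*\sup_{w\in\overline{\psi(\disk^+)}}|N(\epsilon,w)|$, which is finite and, since $N$ is continuous on $\Delta\times E\supseteq\Delta\times\overline{\psi(\disk^+)}$, bounded locally uniformly in $\epsilon$; in particular each $R(\epsilon)$ genuinely lies in $A_1^\infty(\disk^+)$ and $R$ is locally bounded.

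To finish I would prove $R$ is complex‑differentiable by the Cauchy‑integral estimate already used in the proof of Theorem \ref{th:holomorphicity_Grunsky_comp}. Fix $\epsilon_0\in\Delta$ and $\rho>0$ with $\{|s-\epsilon_0|\le\rho\}\subseteq\Delta$, and set $\dot R(\epsilon_0)(z):=\partial_\epsilon N(\epsilon_0,\psi(z))\,\psi'(z)$, which lies in $A_1^\infty(\disk^+)$ by the same bound as above. For $|t|<\rho$, Cauchy's formula gives, for every $w\in\psi(\disk^+)$,
\[
\frac{N(\epsilon_0+t,w)-N(\epsilon_0,w)}{t}-\partial_\epsilon N(\epsilon_0,w)=\frac{t}{2\pi i}\oint_{|s-\epsilon_0|=\rho}\frac{N(s,w)\,ds}{(s-\epsilon_0)^2(s-\epsilon_0-t)},
\]
so, putting $w=\psi(z)$, multiplying by $\psi'(z)$, weighting by $(1-|z|^2)$, and using $\|\psi'\|_*<\infty$ together with $C:=\sup_{|s-\epsilon_0|=\rho,\ w\in\overline{\psi(\disk^+)}}|N(s,w)|<\infty$, one obtains
\[
\left\|\frac{R(\epsilon_0+t)-R(\epsilon_0)}{t}-\dot R(\epsilon_0)\right\|_{A_1^\infty(\disk^+)}\le\frac{C\,\|\psi'\|_*\,|t|}{\rho(\rho-|t|)}\longrightarrow0\quad(t\to0).
\]
Hence $R$, and therefore $\chi\circ Q$, is holomorphic on $\Delta$, which proves $Q$ is holomorphic. (Alternatively, one may skip this last estimate and conclude from the facts that $R$ is locally bounded and that $\epsilon\mapsto R(\epsilon)(z_0)$ is holomorphic for each $z_0\in\disk^+$, since point evaluations form a separating family of continuous linear functionals on $A_1^\infty(\disk^+)$; see \cite{Mujica}.)

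The step needing most care is not conceptually deep but is easy to get wrong: the chain‑rule identity for $(M_\epsilon\circ\psi)''/(M_\epsilon\circ\psi)'$ and the verification that each $R(\epsilon)$ actually belongs to $A_1^\infty(\disk^+)$. The two quantitative inputs that make this work are the Koebe estimate $\|\psi'\|_*<\infty$, which rests on $\psi(\disk^+)$ being bounded, and the zero‑freeness of $\partial_2 M(\epsilon,\cdot)$ coming from injectivity in the second variable.
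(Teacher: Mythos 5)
Your proof is correct and follows essentially the same route as the paper's: the same chain-rule decomposition of the pre-Schwarzian, $\mathcal{A}(M_\epsilon\circ\psi)=\mathcal{A}(M_\epsilon)\circ\psi\cdot\psi'+\mathcal{A}(\psi)$ with $\mathcal{A}(f)=f''/f'$, and the same key bound $\sup_{z\in\disk^+}(1-|z|^2)|\psi'(z)|<\infty$ (you obtain it from Koebe distortion, the paper from the Schwarz lemma applied to the hyperbolic metric of a neighbourhood of $\psi(\disk^+)^{\mathrm{cl}}$). The only divergence is at the final step, where you prove norm-holomorphicity directly via a Cauchy-integral estimate, while the paper invokes the weak-holomorphy-plus-local-boundedness criterion of Grosse-Erdmann with point evaluations --- which is exactly the alternative you note in your closing parenthesis.
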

 \begin{proof}
 Define $\mathcal{A}(f) = f''/f'$.
  We need to show that for fixed $\psi$, $\mathcal{A}(M_\epsilon \circ \psi)$ and $(M_\epsilon \circ \psi)'(0)$ are holomorphic
  in $\epsilon$.  The second claim follows from the fact that the $z$-derivatives of  all orders of $M_\epsilon$ are holomorphic in $\epsilon$ for fixed $z$.

  To prove holomorphicity of $\epsilon \mapsto \mathcal{A}(M_\epsilon \circ \psi)$,
  it is enough to show weak holomorphicity and local boundedness in the $A_1^\infty(\disk^+)$ norm \cite{GrosseErdmann}; that is, to show  local boundedness and that for some set of separating continuous
  functionals $\{\alpha\}$ in the dual of $A_1^\infty(\disk^+)$, $\alpha \circ \mathcal{A}(M_\epsilon \circ \psi)$ is holomorphic for all $\alpha$.  Let $e_z$ be the
  point evaluation function $e_z \psi = \psi(z)$.  These are continuous on $A_1^\infty(\disk^+)$
  and obviously separating on any open set.
  Since
  \begin{equation} \label{eq:lemma_hol_temp}
    \mathcal{A}(M_\epsilon \circ \psi)= \mathcal{A}(M_\epsilon) \circ \psi \cdot \psi' + \mathcal{A}(\psi)
  \end{equation}
  clearly $e_z(\mathcal{A}(M_\epsilon \circ f))$ is holomorphic in $\epsilon$.

  Next, let $F \subseteq E$ be a simply connected open set such that ${\psi(\disk^+)}^{\mathrm{cl}} \subset F$. Let $\lambda_F(z)^2$ denote the hyperbolic line element on $F$ (that is, $\lambda_F(z) = |g'(z)|/(1-|g(z)|^2)$ for any biholomorphism $g:F \rightarrow \disk^+$).  Since $\psi(\disk^+)^{\mathrm{cl}}$ is compactly contained in $F$,  $1/\lambda_F(\psi(z))$ is bounded below on $\disk^+$.  Thus by the Schwarz lemma, there
  is a constant $C$ such that
  \begin{equation} \label{eq:temp_Schwarz}
    (1-|z|^2)|\psi'(z)| \leq \frac{1}{\lambda_F(\psi(z))} \leq C
  \end{equation}
  for all $z \in \disk^+$.

  It remains to show that $\mathcal{A}(M_\epsilon \circ \psi)$.   Equality  (\ref{eq:lemma_hol_temp}) yields that for any fixed $\epsilon$
  \[  \left\| \mathcal{A}(M_\epsilon \circ \psi)\right\|_{A_1^\infty(\disk^+)}
     \leq  \sup_{z \in \disk^+} \left| \mathcal{A}(M_\epsilon) \circ \psi(z)
       \right| \sup_{z \in \disk^+} (1-|z|^2) |\psi'(z)|
       +  \sup_{z \in \disk^+} (1-|z|^2) |\mathcal{A}(\psi)(z)|.   \]
   Since $\mathcal{A}(M_\epsilon)$ is jointly holomorphic in $\epsilon$ and $z$
and ${\psi(\mathbb{D})}^{\mathrm{cl}} \subseteq E$ for any fixed $\epsilon_0$, there is a compact set $D$ containing $\epsilon_0$ such that $|\mathcal{A}(M_\epsilon)|$ is bounded on $\psi(\disk^+)$ by a constant independent of $\epsilon \in D$.  Using (\ref{eq:temp_Schwarz}) and the fact that  $\mathcal{A}(\psi)$ is in $A_1^\infty(\disk^+)$ we obtain that $\mathcal{A}(M_\epsilon \circ \psi)$ is locally bounded, which completes the proof.
 \end{proof}

  We now prove the main theorem.
 \begin{theorem}  $\Pi$ and $\tilde{\Pi}$ are holomorphic.
 \end{theorem}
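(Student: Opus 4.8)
The plan is to reduce the holomorphicity of $\Pi$ and $\tilde{\Pi}$ to the holomorphicity results already established, using the fibration structure and the local coordinates from Theorem \ref{th:riggedTeich_coord}. First I would observe that it suffices to treat $\tilde{\Pi}$: since $\Pi = \tilde{\Pi} \circ \mathcal{P}$ and $\mathcal{P}$ is holomorphic by Theorem \ref{th:FandPholomorphic}, holomorphicity of $\tilde{\Pi}$ immediately gives holomorphicity of $\Pi$. Moreover, since $\Pi_1 = \mathcal{F} \circ \mathcal{P}$ (respectively $\tilde{\Pi}_1 = \mathcal{F}$) is holomorphic by Theorem \ref{th:FandPholomorphic}, and holomorphicity into a product is equivalent to holomorphicity of each component, the entire problem reduces to showing that the second component $\tilde{\Pi}_2 : \widetilde{T}(\riem_0^P) \to \mathfrak{B}(n)$, sending $[\riem_0^P, F_1^P, \riem_1^P, \bm{f}] \mapsto \hatGrfull(\bm{f})$, is holomorphic.

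To prove holomorphicity of $\tilde{\Pi}_2$, I would work in the local biholomorphic coordinate system $\Psi : N \times V \to \widetilde{T}(\riem_0^P)$ from part (5) of Theorem \ref{th:riggedTeich_coord}, which sends $(\epsilon, \bm{f}) \mapsto [\riem_0^P, \nu_\epsilon \circ F_*^P, \nu_\epsilon(\riem_*^P), \nu_\epsilon \circ \bm{f}]$. Composing with $\tilde{\Pi}_2$, I need to show that $(\epsilon, \bm{f}) \mapsto \hatGrfull(\nu_\epsilon \circ \bm{f})$ is holomorphic on $N \times V$. Here I would use the local coordinate $G : W \to V$ from Remark \ref{re:Oqccoords}, so that writing $\bm{f} = (g_1 + p_1, \ldots, g_n + p_n)$ with $\bm{g} \in W \subseteq \Oqc(n)$, the task becomes showing that
\[
(\epsilon, \bm{g}) \longmapsto \hatGrfull\bigl( \nu_\epsilon \circ (g_1 + p_1), \ldots, \nu_\epsilon \circ (g_n + p_n) \bigr)
\]
is holomorphic on $N \times W$. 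Now $\hatGrfull$ depends on its arguments only through the integral kernels of Theorem \ref{th:integral_representation}, which are M\"obius invariant and in particular unaffected by the choice of normalization of $\nu_\epsilon$; and by Theorem \ref{th:holomorphicity_Grunsky_comp}, $\hatGrfull$ is holomorphic as a function of the riggings themselves (suitably presented as $\Oqc(n) \times N$ coordinates). So the remaining point is to check that $\epsilon \mapsto \nu_\epsilon \circ (g_i + p_i)$ varies holomorphically in $\Oqc$, which is precisely the content of Lemma \ref{le:CompHoloOqc_manyvar}: applying it with $M = \nu$, $\Delta = N$ (componentwise), and $\psi = g_i + p_i$ (after translating so that the relevant map fixes $0$), using properties (2) and (3) of Theorem \ref{th:riggedTeich_coord} that $\nu_\epsilon$ is one-to-one and meromorphic on a neighbourhood of each $K_i$ and holomorphic in $\epsilon$. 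This exhibits the composition $\tilde{\Pi}_2 \circ \Psi \circ (\mathrm{id} \times G)$ as a composition of holomorphic maps.

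I would then assemble the pieces: $\tilde{\Pi}_2 \circ \Psi$ is holomorphic in a neighbourhood of every point (since $\Psi$ and $G$ are biholomorphisms onto open sets, and $\tilde{\Pi}_2$ is well-defined independently of representatives by Theorem \ref{th:Grunsky_Mobius_invariant}), hence $\tilde{\Pi}_2$ is holomorphic on all of $\widetilde{T}(\riem_0^P)$; combined with holomorphicity of $\tilde{\Pi}_1 = \mathcal{F}$, we get holomorphicity of $\tilde{\Pi}$, and precomposing with the holomorphic $\mathcal{P}$ yields holomorphicity of $\Pi$. For the degenerate cases $n = 1, 2, 3$ the argument is identical except that $T(\riem_0^P)$ is a point and one only needs the $\mathfrak{B}(n)$-valued component; one replaces Theorem \ref{th:riggedTeich_coord} by the observation in Remark \ref{re:special_case_equiv_rel} that the equivalence relation just identifies riggings up to a global conformal map, so that $V$ (or rather $\Oqc(\riem_0^P)$ with its complex structure) already serves as a local coordinate and holomorphicity follows directly from Theorem \ref{th:holomorphicity_Grunsky_comp} and Lemma \ref{le:CompHoloOqc_manyvar} (with no $\epsilon$ parameter, or with $\nu_\epsilon$ trivial).

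The main obstacle I anticipate is bookkeeping rather than conceptual: one must be careful that the normalizations in Theorem \ref{th:riggedTeich_coord} (fixing $p_1, p_2, p_3$) are harmless for $\hatGrfull$ — which is guaranteed by the M\"obius invariance in Theorem \ref{th:Grunsky_Mobius_invariant} — and that the compact containment conditions ($\nu_\epsilon(\bm{f}(\disk^+))^{\mathrm{cl}}$ staying in fixed disjoint compact sets for $\epsilon, \bm{f}$ in a small enough neighbourhood) hold, so that the hypotheses of both Theorem \ref{th:holomorphicity_Grunsky_comp} and Lemma \ref{le:CompHoloOqc_manyvar} are genuinely satisfied; these follow from property (2) of Theorem \ref{th:riggedTeich_coord} together with Remark \ref{re:Oqccoords} (the openness of $V$), but must be stated with care. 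A secondary subtlety is the translation needed to match the normalization $\psi(0) = 0$ required by Lemma \ref{le:CompHoloOqc_manyvar} with the riggings $f_i$ satisfying $f_i(0) = p_i$; this is exactly what the conjugation by $G$ in Remark \ref{re:Oqccoords} handles.
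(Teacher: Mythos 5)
Your reduction follows the paper's own structure closely: use holomorphicity of $\mathcal{P}$ and $\mathcal{F}$ to reduce to $\tilde{\Pi}_2$, push down via $\Psi$ and $G$ to a formula $(\epsilon,\bm{g}) \mapsto \hatGrfull(\nu_\epsilon(g_1+p_1),\ldots,\nu_\epsilon(g_n+p_n))$, and then invoke Theorem~\ref{th:holomorphicity_Grunsky_comp} and Lemma~\ref{le:CompHoloOqc_manyvar}. However, there is a genuine gap at the step where you hand the composed riggings to Theorem~\ref{th:holomorphicity_Grunsky_comp}, and you misattribute the fix.

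The problem is that $\nu_\epsilon\circ(g_i+p_i)$ is not of the form $g+a+p_i$ with $g\in\Oqc$ and $a\in\mathbb{C}$ in any automatic way: it sends $0$ to $\nu_\epsilon(p_i)$, which is an $\epsilon$-\emph{dependent} point generically different from $p_i$ (only $p_1,p_2,p_3$ are fixed by $\nu_\epsilon$). So to land in the domain $W\times N$ of $H$ from Theorem~\ref{th:holomorphicity_Grunsky_comp}, you must explicitly decompose
\[
\nu_\epsilon\circ f_i^0 \;=\; \hat{\nu}_i(\epsilon,\cdot)\circ f_i^0 \;+\; \bigl(\nu_\epsilon(p_i)-p_i\bigr) \;+\; p_i,
\qquad \hat{\nu}_i(\epsilon,z):=\nu_\epsilon(z)-\nu_\epsilon(p_i),
\]
where the first summand is an honest element of $\Oqc$ (vanishes at $0$) fed to Lemma~\ref{le:CompHoloOqc_manyvar}, and the middle summand is the $N$-coordinate. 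You write that the normalization issue ``is exactly what the conjugation by $G$ in Remark~\ref{re:Oqccoords} handles,'' but $G$ only absorbs the \emph{constant} shift by $p_i$; the $\epsilon$-dependent shift $\nu_\epsilon(p_i)-p_i$ is precisely why the $N$-variable appears in the statement of Theorem~\ref{th:holomorphicity_Grunsky_comp}, and why that theorem is stated with translation parameters rather than for $\Oqc(n)$ alone. Related to this, when applying Lemma~\ref{le:CompHoloOqc_manyvar} the map $M$ must be taken as $M_\epsilon(z)=\nu_\epsilon(z+p_i)-\nu_\epsilon(p_i)$ with $\psi=g_i^0$, not ``$M=\nu$'' with $\psi=g_i+p_i$, since Lemma~\ref{le:CompHoloOqc_manyvar} requires $\psi\in\Oqc$ and produces output in $\Oqc$. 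Finally, you present the conclusion as a ``composition of holomorphic maps,'' but Lemma~\ref{le:CompHoloOqc_manyvar} only gives holomorphicity in $\epsilon$ for fixed $\psi$; to obtain joint holomorphicity in $(\epsilon,\bm{g})$ one should, as the paper does, invoke Hartogs' theorem and prove separate holomorphicity in $\epsilon$ and in $\Oqc(n)$. With those two repairs — the explicit $\hat{\nu}_i$ decomposition and the Hartogs reduction — your argument coincides with the paper's.
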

 \begin{proof}
   Since $\mathcal{P}$ and $\tilde{\Pi}_1= \mathcal{F}$ are holomorphic by Theorem \ref{th:FandPholomorphic},
   it suffices to show that $\tilde{\Pi}_2$ is holomorphic.

    Fix an arbitrary point $[\riem_0,F_*^P,\riem_*^P,\bm{f}^0] \in \widetilde{T}(\riem_0)$.
   We will show that $\tilde{\Pi}$ is holomorphic at this point.
   Choose the representative $\riem_*^P = \sphere \backslash \{p_1,\ldots,p_n \}$, and
   $\bm{f}^0=(f_1^0,\ldots,f_n^0) \in \Oqc(\riem_*^P)$.
    Let $K_i$, $W$,
   $V$, and $G$ be as in Remark \ref{re:Oqccoords} and choose $N$ as in Theorem \ref{th:holomorphicity_Grunsky_comp}.  If $n>3$,
   by Theorem \ref{th:riggedTeich_coord} and Remark \ref{re:Oqccoords}
   it is enough to show that $\widetilde{\Pi}_2 \circ \Psi \circ \left(\text{Id} \times G \right)$ is holomorphic,
   where $\text{Id}$ is the identity on $N$.   In the cases that $n=2$ or $n=3$, it automatically reduces to this.
   The explicit formula is
   \[  \widetilde{\Pi}_2 \circ \Psi \circ \left(\text{Id} \times G \right) (\epsilon, g_1,\ldots,g_n)
      = \hatGrfull(\nu_\epsilon(g_1 + p_1), \ldots, \nu_\epsilon(g_n + p_n)).  \]

   By Hartog's theorem \cite{Mujica} it is enough to show separate holomorphicity in
   $\epsilon$ and in $\Oqc(n)$.

   First we fix $\epsilon = 0$.  In that case, we have that
   \[  \widetilde{\Pi}_2 \circ \Psi \circ \left(\text{Id} \times G \right) (0, g_1,\ldots,g_n)
      = \hatGrfull(g_1 + p_1, \ldots, g_n + p_n).  \]
   This is holomorphic in $\Oqc(n)$ by applying Theorem \ref{th:holomorphicity_Grunsky_comp} with fixed
   $(a_1,\ldots,a_n)$.

   Now fix $(g^0_1,\ldots,g^0_n) = G^{-1}(\bm{f}^0) = (f_1^0-p_1,\ldots,f_n^0-p_n)$ and vary $\epsilon$.  In this case we have
   \[  \widetilde{\Pi}_2 \circ \Psi \circ \left(\text{Id} \times G \right) (\epsilon, g_1^0,\ldots,g_n^0)
      = \hatGrfull(\nu_\epsilon(f^0_1), \ldots, \nu_\epsilon(f^0_n)).   \]
   If we set $\hat{\nu}_i(\epsilon,z) = \nu_\epsilon(z)-\nu_\epsilon(p_i)$ then we can write
   \[   \hatGrfull(\nu_\epsilon(f_1^0), \ldots, \nu_\epsilon(f_n^0))
     = H(\hat{\nu}_1(\epsilon, f_1^0),\ldots,\hat{\nu}_n(\epsilon, f_n^0),\nu_\epsilon(p_1),\ldots,\nu_\epsilon(p_n)).  \]
     Where $H$ is defined in Theorem \ref{th:holomorphicity_Grunsky_comp}.
   Now by Theorem \ref{th:riggedTeich_coord} $\epsilon \mapsto \nu_\epsilon(p_i)$ is holomorphic
   in $\epsilon$, and by Lemma \ref{le:CompHoloOqc_manyvar} combined with Hartogs' theorem
   on separate holomorpicity in finitely many variables,  the map from $N$ to $\Oqc$ given by
   \[  \epsilon \longmapsto \hat{\nu}_i(\epsilon,f_i^0)  \]
   is holomorphic.  This together with Theorem \ref{th:holomorphicity_Grunsky_comp}
    shows that $\widetilde{\Pi}_2 \circ \Psi \circ \left(\text{Id} \times G \right)$
   is holomorphic at $(0,f_1^0,\ldots,f_n^0)$.  Since the point was arbitrary this completes the
   proof.
 \end{proof}

 The next theorem shows that the map $\tilde{\Pi}$ is injective, and $\Pi$ is nearly so.
 \begin{theorem}  $\tilde{\Pi}(p)=\tilde{\Pi}(q)$
  if and only if $p=q$.  $\Pi(p)=\Pi(q)$ if and only if there is a $[\rho] \in \operatorname{DB}$
  such that $\rho^*p=q$.
 \end{theorem}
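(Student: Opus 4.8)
The plan is to prove that $\tilde{\Pi}$ is injective and then deduce the statement about $\Pi$ as a formal consequence. So suppose $\tilde{\Pi}(p)=\tilde{\Pi}(q)$ with representatives $p=[\riem_0^P,F_1^P,\riem_1^P,\bm{f}]$ and $q=[\riem_0^P,F_2^P,\riem_2^P,\bm{g}]$. Comparing the $\mathfrak{B}(n)$-components gives $\hatGrfull(\bm{f})=\hatGrfull(\bm{g})$, and the whole difficulty is to conclude from this identity of operators that $\bm{g}=S\circ\bm{f}$ for a single M\"obius transformation $S$. The first step is to upgrade equality of operators to equality of the integral kernels of Theorem \ref{th:integral_representation}: if $\iint_{\disk^+}\Phi(z,\zeta)\,\hat{\mathbf{R}}h(\zeta)\,dA_\zeta=0$ for every $h\in A^2(\disk^-)$, then since $\hat{\mathbf{R}}$ maps $A^2(\disk^-)$ onto $\overline{A^2(\disk^+)}$ and $\Phi(z,\cdot)\in A^2(\disk^+)$ (on a diagonal block the difference of the two kernels is holomorphic on $\disk^+\times\disk^+$, the coinciding double poles along $\zeta=z$ cancelling), the reproducing kernel of $A^2(\disk^+)$ applied in the $\zeta$-variable forces $\Phi\equiv 0$. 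Hence for all $i$ and all $i\neq j$,
\begin{gather*}
\frac{f_i'(\zeta)f_i'(z)}{(f_i(\zeta)-f_i(z))^2}=\frac{g_i'(\zeta)g_i'(z)}{(g_i(\zeta)-g_i(z))^2},\\
\frac{f_i'(\zeta)f_j'(z)}{(f_i(\zeta)-f_j(z))^2}=\frac{g_i'(\zeta)g_j'(z)}{(g_i(\zeta)-g_j(z))^2}.
\end{gather*}

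For the diagonal identity I would use $\frac{f_i'(\zeta)f_i'(z)}{(f_i(\zeta)-f_i(z))^2}=\partial_\zeta\!\left(\frac{f_i'(z)}{f_i(z)-f_i(\zeta)}\right)$, and similarly for $g_i$, so that the identity says $\frac{f_i'(z)}{f_i(z)-f_i(\zeta)}-\frac{g_i'(z)}{g_i(z)-g_i(\zeta)}$ is independent of $\zeta$. Solving this relation for $f_i(\zeta)$ exhibits $f_i(\zeta)$ as a M\"obius function of $g_i(\zeta)$ with coefficients depending only on $z$; since $f_i(\zeta)$ is independent of $z$ these M\"obius transformations all coincide, so $f_i\circ g_i^{-1}$ agrees on the open set $g_i(\disk^+)$ with a fixed M\"obius transformation and therefore is one. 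Thus $g_i=S_i\circ f_i$ for a M\"obius transformation $S_i$, which of necessity carries the puncture $f_i(0)$ of $\riem_1^P$ to the puncture $g_i(0)$ of $\riem_2^P$. If $n\geq 2$, substituting $g_i=S_i\circ f_i$ and $g_j=S_j\circ f_j$ into the off-diagonal identity and cancelling $f_i'(\zeta)f_j'(z)$ yields $(S_i(a)-S_j(b))^2=(a-b)^2 S_i'(a)S_j'(b)$ on an open set of $(a,b)$, hence identically; writing $S_i,S_j$ with determinant-one matrices, this says that the bilinear polynomial $(\alpha_i a+\beta_i)(\gamma_j b+\delta_j)-(\alpha_j b+\beta_j)(\gamma_i a+\delta_i)$ equals $\pm(a-b)$, and comparing coefficients forces the matrices of $S_i$ and $S_j$ to be scalar multiples of one another, so $S_i=S_j=:S$. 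Thus $\bm{g}=S\circ\bm{f}$ (for $n=1$ this is the previous sentence with $S=S_1$).

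Now $\sigma:=S|_{\riem_1^P}:\riem_1^P\to\riem_2^P$ is conformal, preserves the punctures and their order, and satisfies $\sigma\circ f_i=g_i$. If $n\leq 3$, Remark \ref{re:special_case_equiv_rel} says precisely that this makes $p=q$. If $n>3$, equality of the $T(\riem_0^P)$-components of $\tilde{\Pi}$ supplies, by the definition of Teichm\"uller equivalence for punctured surfaces, a conformal $\sigma_1:\riem_1^P\to\riem_2^P$ preserving the punctures and their order with $F_2^{-1}\circ\sigma_1\circ F_1$ homotopic to the identity rel punctures; then $\sigma^{-1}\circ\sigma_1$ is a conformal automorphism of an $n$-times punctured sphere with $n\geq 3$ fixing every puncture, hence the identity, so $\sigma=\sigma_1$ and the homotopy condition holds for $\sigma$ as well — giving $p=q$. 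This proves $\tilde{\Pi}$ injective. The statement for $\Pi$ follows: since $\Pi=\tilde{\Pi}\circ\mathcal{P}$ and $\tilde{\Pi}$ is injective, $\Pi(p)=\Pi(q)$ is equivalent to $\mathcal{P}(p)=\mathcal{P}(q)$, which by Theorem \ref{th:kernel_of_P} holds if and only if $\rho^*p=q$ for some $[\rho]\in\operatorname{DB}$; conversely, if $\rho^*p=q$ with $[\rho]\in\operatorname{DB}$ then $\mathcal{P}(p)=\mathcal{P}(q)$ by the same theorem and hence $\Pi(p)=\Pi(q)$.

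The main obstacle is the passage from the identity of Grunsky operators to the relation $\bm{g}=S\circ\bm{f}$. The diagonal step is essentially the classical rigidity statement that the Grunsky kernel of a univalent map determines the map up to post-composition by a M\"obius transformation, and it should be carried out so as to remain valid when $\infty$ lies in some image — which is why I would argue through the primitive $f_i'(z)/(f_i(z)-f_i(\zeta))$ rather than through $\log(f_i(\zeta)-f_i(z))$, since then all the functions in sight are meromorphic on $\disk^+\times\disk^+$ and the identity theorem applies without incident. The off-diagonal step then rigidifies the a priori independent M\"obius transformations $S_i$ into a common $S$. Everything past that point — matching the two equivalence relations and invoking Theorem \ref{th:kernel_of_P} — is routine.
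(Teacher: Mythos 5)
Your proof is correct, and the core of it --- the passage from equality of Grunsky operators to $\bm{g}=S\circ\bm{f}$ for a single M\"obius map $S$ --- follows a genuinely different route from the paper's. The paper first fixes a normalization of the representative (e.g.\ $f_1(0)=0$, $f_2(0)=1$, $f_3(0)=-1$, with the remaining puncture locations $f_k(0)$, $k>3$, read off from the $T(\riem_0^P)$-component of $\tilde{\Pi}$) and then shows the normalized maps are \emph{uniquely} determined by extracting Taylor data from the kernels: the diagonal kernels give the Schwarzians of the $f_i$ via the limit $\zeta\to z$ and the Bergman--Schiffer identity, while the off-diagonal kernels evaluated and differentiated at $\zeta=z=0$ give the products $f_i'(0)f_j'(0)$ and then $f_i''(0)$. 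You instead keep everything unnormalized: the diagonal identity, integrated in $\zeta$, forces $g_i=S_i\circ f_i$, and the off-diagonal identity $(S_i(a)-S_j(b))^2=(a-b)^2S_i'(a)S_j'(b)$ forces all the $S_i$ to coincide (your coefficient comparison is right: the vanishing of the $ab$- and constant coefficients makes the columns of the two unimodular matrices proportional, and the remaining coefficients force both proportionality factors to equal the same $\pm1$). This buys something: the Grunsky operator alone determines $\bm{f}$ up to one global M\"obius transformation, so the $T(\riem_0^P)$-component is never needed to locate the punctures for $k>3$, only to supply the homotopy condition when $n>3$ --- a point you treat more explicitly than the paper, which invokes Remark \ref{re:special_case_equiv_rel} (stated only for $n\le 3$) there, whereas you correctly reconcile $S$ with the conformal map coming from the Teichm\"uller equivalence by noting that a M\"obius map fixing at least three points is the identity. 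Both proofs share the initial reduction from operators to kernels via the reproducing kernel of $A^2(\disk^+)$ (and both implicitly use that the diagonal Grunsky kernel lies in $A^2(\disk^+)$ in each variable, so you match the paper's level of rigor there), and the final deduction for $\Pi$ from Theorem \ref{th:kernel_of_P} is identical.
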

 \begin{proof}  The second claim follows from the first by Theorem \ref{th:kernel_of_P}
 and the fact that $\tilde{\Pi} \circ \mathcal{P} = \Pi$.

 Now assume that $\tilde{\Pi}(p)=([\riem_0^P,F_1^P,\riem_1^P],S)$.
 For $S=\hatGrfullf \in \mathfrak{B}(n)$, the kernel functions of the block operators
 $\hatGr_{ij}$ are uniquely determined by $S$ for all $i,j$.  To see this, one need only apply
 the operator to the Bergman kernel for $A^2(\disk^+)$ in place of $\hat{\mathbf{R}} h$
 in Theorem \ref{th:integral_representation}.

 The proof proceeds in cases, depending on whether $n=1$, $n=2$, or $n \geq 3$.  If $n=1$,
 as already observed, the function
 \[   \frac{1}{(\zeta-z)^2} - \frac{f_1'(\zeta)f_1'(z)}{(f_1(\zeta)- f_1(z))^2}  \]
 is uniquely determined by $S$.  Now letting $\zeta \rightarrow z$, identity (3.7) in Bergman-Schiffer's paper
 \cite{BergmanSchiffer} yields that this quantity tends to one-sixth of the Schwarzian derivative of $f_1$.
 Thus the Schwarzian of $f_1$ is uniquely determined by $S$, and therefore $f_1$ is uniquely determined up to
 post-composition by a M\"obius transformation.  The claim now follows from  Remark \ref{re:special_case_equiv_rel}.

 Now assume that $n=2$, and that $\tilde{\Pi}(p)=([\riem_0^P,F_1^P,\riem_1^P],S)$
 is given.   Let $(\riem_0^P,F_1^P,\riem_1^P,\bm{f})$ be any representative of $p$, where
 $\bm{f}=(f_1,f_2)$.  By post-composing $\riem_1^P$, $F_1^P$, $f_1$ and $f_2$ simultaneously by
 a M\"obius transformation $\sigma$, we can assume that $f_1$ and $f_2$ are normalized
 so that $f_1(0)=0$, $f_1'(0)=1$ and $f_2(0)=1$ (any fixed value will do).  If it can be
 shown that this uniquely determines $f_1$ and $f_2$, then by Remark \ref{re:special_case_equiv_rel}
 it will follow that $\widetilde{\Pi}(p)=\widetilde{\Pi}(q)\Rightarrow p=q$.

 By the first paragraph of the proof, we have that the kernel function of $\text{Gr}_{12}(f)$
 \begin{equation} \label{eq:injective_proof_temp}
  \frac{f_1'(\zeta) f_2'(z)}{(f_1(\zeta)-f_2(z))^2}
 \end{equation}
 is uniquely determined by $\widetilde{\Pi}(p)$.  Setting $\zeta = z = 0$ yields that
 $f_2'(0)$ is uniquely determined.  Differentiating (\ref{eq:injective_proof_temp})
 with respect to $z$, we see that
 \begin{equation} \label{eq:injective_proof_temptwo}
  \frac{f_1'(\zeta) f_2''(z) \left( f_1(\zeta)-f_2(z) \right) + 2 f_1'(\zeta) f_2'(z)^2}
     {(f_1(\zeta)-f_2(z))^3}
 \end{equation}
 is uniquely determined, and setting $\zeta = z = 0$, one can also determine $f_2''(0)$ uniquely.
 The same argument applied to $\text{Gr}_{21}(f)$ shows that $f_1''(0)$ is determined
 uniquely, and applying the considerations in the first paragraph to $\text{Gr}_{11}(f)$ and $\text{Gr}_{22}(f)$
 shows that the Schwarzians of $f_1$ and $f_2$ are determined by $\widetilde{\Pi}(p)$.
 Since we have determined $f_i(0)$, $f_i'(0)$, $f_i''(0)$ for $i=1,2$, $f_i$'s
 are uniquely determined and the claim follows.

 Now we consider the case that $n \geq 3$; again assume $\tilde{\Pi}(p)=([\riem_0^P,F_1^P,\riem_1^P],S)$
 is given and $(\riem_0^P,F_1^P,\riem_1^P,\bm{f})$ be any representative of $p$.  By post-composing $\riem_1^P$, $F_1^P$, and $\bm{f}$ simultaneously by
 a M\"obius transformation $\sigma$, we can assume that $f_1, f_2$ and $f_3$ are normalized
 so that $f_1(0)=0$, $f_2(0)=1$ and $f_3(0)=-1$ (again, any fixed values will do).
 If $k \geq 3$, then the remaining values of the points $f_k(0)$ for $k \neq 1,2,3$
 are uniquely determined by the values of $F_1^P(p_k)$.  This is because these values are determined
 by the Teichm\"uller equivalence class $[\riem_0^P,F_1^P,\riem_1^P]$ up to post-composition
 by $\sigma$, and $\sigma$ is uniquely determined by the normalizations above.
 As in
 the $n=2$ case, by Remark \ref{re:special_case_equiv_rel} it is enough
 to show that $f_1,\ldots,f_n$ are now uniquely determined.

 Arguing as in the $n=2$ case, for any $i \neq j$, $\widetilde{\Pi}(p)$ uniquely determines
 \[  \frac{f_i'(\zeta) f_j'(z)}{(f_i(\zeta)-f_j(z))^2}.     \]
 Again setting $\zeta = z =0$, we see that for
 $i \neq j$, all pair-wise products $f_i'(0) f_j'(0)$ are uniquely determined.  By an easy algebraic argument, fixing any three
 pair-wise distinct values $i$, $j$, and $k$, the resulting three products
 $f_i'(0) f_j'(0)$, $f_j'(0) f_k'(0)$, and $f_k'(0) f_i'(0)$ uniquely determine $f_i'(0)$,
 $f_j'(0)$ and $f_k'(0)$.  Since $i,j,k$ are arbitrary we have shown that $f_i'(0)$ is
 determined uniquely for $i=1,\ldots, n$.

 By differentiating the kernels $\hatGr_{ij}$ twice with respect to $z$
 and setting $\zeta = z =0$  as in the $n=2$
 case, we uniquely determine $f_i''(0)$ for all $i=1,\ldots,n$.  Also once again, we have that
 $\text{Gr}_{ii}$ uniquely determines the Schwarzian of $f_i$ for all $i$.  Thus $f_i$'s
 are uniquely determined, and this completes the proof.
 \end{proof}
 \begin{remark}  One can of course compose the map $\Pi$ by the classical period map on $T(\riem^P)$
 to obtain a full embedding of $T(\riem)$ by a period mapping.
 \end{remark}
\end{subsection}
\end{section}

\end{document}